\documentclass[11pt]{amsart}   
\usepackage{amsmath, amssymb}  
\usepackage{amsthm}

\textwidth=5in
\textheight=7.5in

\usepackage{verbatim}
\usepackage{amsfonts}
\usepackage{amssymb}
\usepackage{mathrsfs}
\usepackage[dvips]{graphicx}
\usepackage{pstricks}

\usepackage{setspace}
\usepackage[inner=2.4cm,outer=4cm,top=2.4cm,bottom=2.4cm,marginparwidth=90pt]{geometry}   
\usepackage{times}

\usepackage[notref,notcite,final]{showkeys}  
  
\numberwithin{equation}{section}   
 
\usepackage{citesort}  
 
\usepackage{framed}  

\newcommand{\dx}{\,dx}

\newcommand{\bigR}{{\mathbb R}}

\newtheorem{theorem}{Theorem}[section]  
\newtheorem{lemma}[theorem]{Lemma}  
\newtheorem{proposition}[theorem]{Proposition}  
\newtheorem{corollary}[theorem]{Corollary}  
\newtheorem{remark}[theorem]{Remark}   
 
\newtheorem{assumption}[theorem]{Assumption}

\begin{document}  
  
\title{On the convergence of axially symmetric volume preserving mean curvature flow}  
\author{Maria Athanassenas}  
\address{Maria Athanassenas,   School of Mathematical Sciences,  
Monash University,  
Vic 3800  
Australia}  
\email{maria.athanassenas@monash.edu} 
  
\author{Sevvandi Kandanaarachchi}  
\address{Sevvandi Kandanaarachchi, School of Mathematical Sciences,  
Monash University,  
Vic 3800  
Australia}  
\email{sevvandi.kandanaarachchi@monash.edu} 

 \onehalfspacing  
  
\subjclass[2010]{53C44, 35K93}


\begin{abstract}
We study the convergence of an axially symmetric hypersurface evolving by volume-preserving mean curvature flow. Assuming the surface is not pinching off along the axis at any time during the flow, and without any additional conditions, as for example on the curvature, 
we prove that it converges to a hemisphere, when the hypersurface has a free boundary and satisfies Neumann boundary data, and to a sphere when it is compact without boundary. 
\end{abstract}

\maketitle

\section{Introduction}
Consider $n$-dimensional hypersurfaces $M_t$, defined by a one parameter family of 
smooth immersions $\mathbf{x}_t: M^n \rightarrow \mathbb{R}^{n+1}$. The hypersurfaces
$M_t$ are said to move by mean curvature, if $\mathbf{x}_t = \mathbf{x}(\cdot, t)$ 
satisfies
\begin{equation}\label{eq:int_2}
\frac{d}{dt} \mathbf{x}(p, t)=-H(p,t)\nu(p,t), \hspace{5 mm} p\in M^n, t>0 \, .
\end{equation}
By $\nu(p,t)$ we denote a designated outer unit normal of $M_t$ at $\mathbf{x}(p,t)$ (outer normal in case of compact surfaces without boundary), and
by $H(p,t)$ the mean curvature with respect to this normal.

If the evolving compact surfaces $M_t$ are assumed to enclose a prescribed volume $V$ the evolution equation
changes as follows: 
\begin{equation}\label{eq:int_1}
\frac{d}{dt} \mathbf{x}(p,t) = -\left( H(p,t) -h(t)\right)\nu(p,t), 
\hspace{5 mm} p\in M^n, t>0,
\end{equation}
where $h(t)$ is the average of the mean curvature,
\[h(t) =\frac{\int_{M_t} H dg_t}{\int_{M_t} dg_t}, \]
and $g_t$ denotes the metric on $M_t$. The surface area $|M_t|$ of the hypersurface is known to decrease under the flow (see \cite{MA1}).\\

We are interested in an axially symmetric surface, which encloses the volume $V$, and which has a nonempty boundary contained in a plane $\Pi$ that is perpendicular to the axis of rotation. Motivated by the fact that the stationary solution to the associated Euler Lagrange equation satisfies a Neumann boundary condition, we also assume the surface  to meet that plane $\Pi$ at right angles along its boundary. Assuming the surface to be smooth, it will also intersect orthogonally the axis of rotation.

We consider the case where the surface will not pinch-off along the axis of rotation during the flow, having only one intersection with that axis at the point that is the furthest from the supporting plane $\Pi$, and prove that the surface converges to a halfsphere.

The methods we use also apply in the case of an axially symmetric surface without boundary having a similar lower height bound, and in that case we prove in Section 8 that the flow converges to a sphere. 

The results in this paper make use of the axial symmetry, and no additional conditions on the curvature of the surface are assumed. Converge to spheres has been previously proved for the volume flow by Huisken in \cite{GH2}, for compact, uniformly convex initial surfaces; and  in \cite{HL1}, Li assumes bounds on the traceless second fundamental form.

Our results can be seen as complementing the work of the first author in \cite{MA1,MA2}, and the second author's work on her PhD dissertation: In the case of the surface behaving like a ``bridge" between two parallel surfaces, if one were able to flow through singularities, the axially symmetric volume-preserving flow would converge to a number of spheres and (possibly) two hemispheres on the parallel planes, like beads strung along the axis of rotation.

\section{Notation, definitions and assumptions}
In the case of the surface $M_t$ intersecting the obstacle $\Pi$, we will at different stages divide it into two parts as in \cite{AAG}: one adjacent to the plane and the remainder that contains the (only) intersection with the axis of rotation. 

Let $\Pi = \{ (x_1, \ldots, x_{n+1}) \in \bigR^{n+1} : x_1 =0 \}$ and $M_t$ be contained in the right halfspace, $M_t \subset \{ x_1 > 0 \}$. We use $R_t$ as the generic notation for the part of the surface closest to the plane, and $C_t$ for the rest - the cap that intersects the axis of rotation - , and we will introduce various superscripts depending on the situation that will be made clear in the text. 

We denote by $P(t) = (d(t), 0)$ the ``pole": the point of intersection of $M_t$ with the axis of rotation. We assume that there are no singularities developing, so that $P(t)$ is the only point of intersection of $M_t$ with the axis of rotation for all time. We are interested in those solutions where the generating curve of the initial hypersurface is smooth and can be written
as a graph over the $x_1$ axis except at the pole.

We use the notation
\[ \rho_t: [0, d(t)] \rightarrow \mathbb{R} \]
for the radius function of the surface of revolution. 

Let $\mathbf{i}_1, \cdots , \mathbf{i}_{n+1} $ be the standard basis in $\mathbb{R}^{n+1}$ and $\mathbf{i}_1$ be the direction of the axis of rotation. We denote the quantities associated with the cap with a  tilde $\, \tilde{ }$ , and  in this context we work with the vertical graph equation.

\smallskip

Furthermore we define the following quantities on $M_t$: 

\noindent
Let $\mathbf{\omega} = \frac{ \hat{\mathbf{x} } }{| \hat{\mathbf{x} } |}  \in \bigR^{n+1} \,$,  $\hat{\mathbf{x} } = (0, x_2, \ldots, x_{n+1})$, denote the outer unit normal to the cylinder intersecting $M_t$ at the point $\mathbf{x}(p,t)$. We call $u = \langle \mathbf{x}, \omega \rangle$ the {\it height function} of $M_t$, and set $v = \langle \nu, \omega \rangle^{-1}$. Note that $v$ corresponds to $\sqrt{1+\dot\rho^2}$, and will be used to obtain gradient estimates.

\noindent
The respective quantities on the cap $C_t$ are the height measured from the plane $\Pi$, $\tilde{u} = \langle \mathbf{x}, \mathbf{i}_1 \rangle$, and $\tilde{v} = \langle \nu, \mathbf{i}_1\rangle^{-1}$. 

\noindent
We cut the hypersurface in two regions using the plane $L_\alpha(t)$, which is parallel to $\Pi$ where $\langle \nu, \mathbf{i}_1 \rangle \mid_{L_\alpha(t) \cap M_t} = 1/\alpha \, , $ with $\alpha$ being a constant.  We define the {\it cap}, determined by the inclination angle, as the connected component of $M_t$ containing the pole $P$
$$ C_t^{\alpha} = \left\{ \mathbf{x}(p,t) \in M_t : \frac{1}{\alpha} < \langle \nu, \mathbf{i}_1 \rangle \leq 1 \right\} \, , $$
and we call $R_t^{\alpha} = M_t \backslash C_t^{\alpha}$ the {\it cylindrical part} of the surface. Note that   
$L_\alpha(t)$ is chosen such that the specific inclination angle is achieved nowhere else between that plane and the pole $P(t)$. As long as the flow is smooth, $C_t^{\alpha}$ is by definition a graph over the $x_1$ axis except at the pole.

\begin{assumption}\label{A1}
 We assume for any $\alpha  > 1 $ \, there exists a constant $c(\alpha) > 0$ depending only on $\alpha$ such that $u\mid_{R_t^{\alpha}} > c(\alpha) \, , $ i.e we assume a lower height bound in $R_t^{\alpha}\, ,$ independent of time, dependent on $\alpha \, .$  
\end{assumption}

\noindent
Thus $P(t)$ is the only point of intersection of $M_t$ with the axis of rotation for all time. The assumption prevents singularities developing on the axis of rotation. 
\smallskip

For an axially symmetric surface the mean curvature is given by
\[ H =  -\frac{\ddot{\rho}}{ (1+\dot{\rho}^2)^{\frac{3}{2}}}  + \frac{n-1}{\rho (1+ \dot\rho^2)^{\frac{1}{2}}} \, ,\]
while the principal curvatures are $k = -\frac{\ddot{\rho}}{(1+\dot{\rho}^2)^{\frac{3}{2}} }$ and $p =\frac{1}{\rho \sqrt{1 + \dot{\rho}^2 }} $.
\noindent
We also introduce another quantity $q= \left\langle \nu, \mathbf{i}_1 \right\rangle u^{-1}$ such that in particular $p^2 + q^2 = u^{-2}$.

\section{Height Estimates}
In this section we prove that $M_t$ satisfies uniform height bounds: both, the height function $u$ as defined above, and also the height when measured as distance from the obstacle $\Pi$, i.e. $\tilde u$, are bounded.
\begin{lemma} \label{HeightEstimate1}
The evolving surfaces $M_t$ satisfy the uniform height bound 
$$u <R = \left(\frac{|M_0|}{\omega_n} \right)^{\frac{1}{n}} \, . $$
\end{lemma}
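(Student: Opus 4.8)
The plan is to use the fact that the surface area $|M_t|$ is non-increasing under the flow (cited from \cite{MA1}), together with an isoperimetric-type comparison that bounds the height $u$ in terms of the surface area. Since $u = \langle \mathbf{x}, \omega\rangle$ is precisely the radius function $\rho$ of the surface of revolution at the point $\mathbf{x}(p,t)$, the claim $u < R$ is really a statement that the ``waist'' of the surface of revolution can never exceed $R = (|M_0|/\omega_n)^{1/n}$.

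First I would fix a time $t$ and a point $p \in M_t$, and let $r = u(p,t) = \rho(s_0)$ be the value of the radius function there. The key geometric observation is that the slice of $M_t$ by the hyperplane $\{x_1 = \text{const}\}$ through $\mathbf{x}(p,t)$ is an $(n-1)$-sphere of radius $r$ in $\bigR^{n+1}$; more usefully, I would look at the orthogonal projection of (a piece of) $M_t$ onto the hyperplane $\{x_1 = 0\}$, or onto the hyperplane through $\mathbf{x}(p,t)$ perpendicular to the axis. Because $M_t$ is an axially symmetric graph over the $x_1$-axis (except at the pole), and because it is connected and meets the supporting plane $\Pi$, the portion of $M_t$ lying ``outside'' radius level considerations projects onto an annulus or disc; more directly, the region of the hyperplane $\{x_1 = c\}$ (for $c$ the $x_1$-coordinate of the point) enclosed by the slice-sphere of radius $r$ has area $\omega_n r^n$ (with $\omega_n$ the volume of the unit $n$-ball), and this projects injectively from a subset of $M_t$. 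Hence $\omega_n r^n \le |M_t|$.

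More carefully, I would argue as follows: consider the level $x_1 = c$ where $c$ is the $x_1$-coordinate of $\mathbf{x}(p,t)$. The part of $M_t$ with $x_1 \ge c$ (the ``cap side'') is a hypersurface with boundary the $(n-1)$-sphere of radius $r$ at height $c$; its orthogonal projection onto $\{x_1 = c\}$ covers the $n$-ball of radius $r$, and orthogonal projection does not increase area. Therefore
$$\omega_n r^n = \left|B_r^n\right| \le \left|\{x_1 \ge c\} \cap M_t\right| \le |M_t| \le |M_0|,$$
which gives $r \le (|M_0|/\omega_n)^{1/n} = R$. To upgrade $\le$ to the strict inequality $u < R$ one notes that equality in the projection step would force $M_t$ to be a flat disc perpendicular to the axis on the cap side, which is incompatible with the surface being a smooth closed-up cap meeting the axis at the pole $P(t)$ and having positive enclosed volume; so the inequality is strict.

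The main obstacle I anticipate is making the projection argument rigorous at the level of which portion of $M_t$ one projects and verifying that the projection is genuinely surjective onto the $n$-ball $B_r^n$ — this uses connectedness of $M_t$, the graph property of the generating curve, and the fact that $M_t$ closes up at the single pole $P(t)$ on the axis (Assumption \ref{A1} guarantees there is exactly one axis intersection, so the cap side is topologically a disc). One has to be a little careful that the generating curve may be non-monotone in $\rho$, but that only helps: if $\rho$ attains the value $r$ somewhere and the curve continues to the pole where $\rho \to 0$, then by the intermediate value theorem every radius in $[0,r]$ is attained on the cap side, which is exactly what surjectivity of the projection onto $B_r^n$ requires. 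Once surjectivity is in hand, the area-non-increasing property of orthogonal projection and the monotonicity $|M_t| \le |M_0|$ finish the proof.
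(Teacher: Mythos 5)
Your proposal is correct and follows essentially the same route as the paper: the paper also combines the area-monotonicity $|M_t|\le|M_0|$ with the observation that projecting the surface onto a plane perpendicular to the axis covers an $n$-ball of radius $u$, giving $|M_0|\ge|M_t|>\omega_n R^n$ and hence the bound. Your additional care about surjectivity of the projection and the strictness of the inequality only fills in details the paper leaves implicit.
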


\begin{proof} We follow a method as in \cite{MA1} in getting bounds for $u$. Assume there exists an $R$ such that $u_{M_t} \geq R$ at some given time $t$. Since the surface area is decreasing under the flow, and by comparing to the projection of the surface onto the plane, we have
\[ |M_0| \geq |M_t| > \omega_n R^n, \]
\noindent
where $\omega_n$ is the volume of the $n$ dimensional unit ball. Therefore
\[ R > \left(\frac{|M_0|}{\omega_n} \right)^{\frac{1}{n}} \]
would contradict the fact that the evolution decreases the surface area. 
\end{proof}

\smallskip

\begin{lemma} \label{HeightEstimate2}
There is a constant $l$ such that the evolving surfaces $M_t$ satisfy the height bound 
$$\tilde{u}   \leq l   \, ,$$ 
that is the distance from the plane $\Pi$ is uniformly bounded.
\end{lemma}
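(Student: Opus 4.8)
The plan is to fix once and for all an inclination parameter $\alpha > 1$ (e.g.\ $\alpha = 2$), decompose $M_t = R_t^{\alpha} \cup C_t^{\alpha}$ along the plane $L_\alpha(t) = \{x_1 = a(t)\}$, and bound $\tilde u$ by adding two contributions: the vertical extent $d(t) - a(t)$ of the cap $C_t^{\alpha}$, controlled by its radial extent via the inclination bound; and the height $a(t)$ of the cylindrical part $R_t^{\alpha}$, controlled by the enclosed volume together with Assumption~\ref{A1}.

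First I would estimate the height of the cap. Parametrise the generating curve of $C_t^{\alpha}$ by arclength $s$, with $\rho$ one of its coordinates, so that $\dot x_1^{\,2} + \dot\rho^{\,2} = 1$ and $\langle \nu, \mathbf{i}_1\rangle = |\dot\rho|$. By definition $\langle\nu,\mathbf{i}_1\rangle > 1/\alpha$ on $C_t^{\alpha}$, hence $|\dot\rho| > 1/\alpha$ there; since the cap is connected and contains the pole, $\rho$ is strictly monotone along it, decreasing from its value $\rho_1 = \rho_t(a(t)) \le R$ on $L_\alpha(t)$ to $0$ at the pole. Thus the generating curve of the cap has length at most $\alpha\rho_1$, while $|\dot x_1| = \sqrt{1 - \dot\rho^{\,2}} < \sqrt{1 - \alpha^{-2}}$, so the $x_1$-extent of the cap satisfies $d(t) - a(t) \le \sqrt{1 - \alpha^{-2}}\cdot \alpha\rho_1 = \sqrt{\alpha^2 - 1}\,\rho_1 \le \sqrt{\alpha^2 - 1}\,R$, using Lemma~\ref{HeightEstimate1}.

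Next I would bound $a(t)$. Since $L_\alpha(t)$ is chosen so that the inclination $1/\alpha$ is attained nowhere between it and the pole, $R_t^{\alpha}$ is exactly the graphical slab $\{0 \le x_1 \le a(t)\}\cap M_t$, and the portion of the region enclosed by $M_t$ lying below $L_\alpha(t)$ has, at height $x_1$, cross-section an $n$-ball of radius $\rho_t(x_1)$; hence its $(n+1)$-volume is $\omega_n\int_0^{a(t)}\rho_t(x_1)^n\dx$, which is at most the fixed enclosed volume $V$. By Assumption~\ref{A1} applied with this $\alpha$, $\rho_t = u \ge c(\alpha)$ on $R_t^{\alpha}$, so $V \ge \omega_n\, c(\alpha)^n\, a(t)$, i.e.\ $a(t) \le V/(\omega_n\, c(\alpha)^n)$. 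Combining, $\tilde u \le d(t) \le V/(\omega_n\, c(\alpha)^n) + \sqrt{\alpha^2 - 1}\,R =: l$, independent of $t$.

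I do not expect a serious obstacle here: the estimate is soft and uses no curvature information. The main point to be careful about is the geometric bookkeeping around $L_\alpha(t)$ — verifying that $C_t^{\alpha}$ really is the single graphical cap $\{a(t) \le x_1 \le d(t)\}$ and $R_t^{\alpha}$ the complementary slab, that the generating curve is monotone on each piece, and that the volume below $L_\alpha(t)$ is genuinely a subregion of the prescribed-volume domain (which holds since $M_t \subset \{x_1 > 0\}$ and meets $\Pi$ orthogonally). One should also confirm the elementary identity $\langle\nu,\mathbf{i}_1\rangle = |\dot\rho|$ along the profile curve with the chosen outer normal.
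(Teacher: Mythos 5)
Your proof is correct, and its overall skeleton (split $M_t$ along $L_\alpha(t)$ into the cap $C_t^{\alpha}$ and the cylindrical part $R_t^{\alpha}$, bound the cap's $x_1$-extent by the inclination condition, bound the length of $R_t^{\alpha}$ by a global quantity plus Assumption \ref{A1}) matches the paper's. The cap estimate is essentially identical: the paper works with the graph slope $|\rho'|\geq 1/\sqrt{\alpha^2-1}$ and gets $d(t)-\tilde u\mid_{\partial C_t^{\alpha}}\leq R\sqrt{\alpha^2-1}$, which is exactly your arclength computation. Where you genuinely diverge is the second step: the paper bounds the length of the cylindrical part using the fact that the flow \emph{decreases surface area}, comparing $|M_t|\leq |M_0|$ with the lateral area $n\omega_n c(\alpha)^{n-1} l_1$ of a cylinder of radius $c(\alpha)$, whereas you use the \emph{conserved enclosed volume}, comparing $V\geq \omega_n\int_0^{a(t)}\rho^n\,dx_1\geq \omega_n c(\alpha)^n a(t)$. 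Both mechanisms are available in this setting (the paper itself uses the identity $V=\omega_n\int_0^{d(t)}\rho^n\,dx_1$ later, to bound $d(t)$ from below), so your argument is sound; it yields the constant $V/(\omega_n c(\alpha)^n)+\sqrt{\alpha^2-1}\,R$ in place of the paper's $|M_0|/(n\omega_n c(\alpha)^{n-1})+R\sqrt{\alpha^2-1}$. The volume route does lean slightly more on the graphical description of the enclosed region (cross-sections being full $n$-balls of radius $\rho_t(x_1)$) and on the preservation of $V$ for the free-boundary flow (which holds because the boundary velocity is tangent to $\Pi$ by the Neumann condition), while the area route needs only the area monotonicity quoted from \cite{MA1}; your closing caveats about the bookkeeping around $L_\alpha(t)$ are exactly the right points to check, and they do hold under the paper's standing assumptions.
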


\begin{figure}[!h]
   \centering
   \scalebox{0.5}{\pagestyle{empty}
\newrgbcolor{xdxdff}{0.49 0.49 1}
\psset{xunit=1.0cm,yunit=1.0cm,dotstyle=o,dotsize=3pt 0,linewidth=0.8pt,arrowsize=3pt 2,arrowinset=0.25}
\begin{pspicture*}(-4,-3.58)(4.04,3.72)
\psline{->}(-3,-3)(-3,3)
\psline{->}(-3,0)(3,0)
\pscurve(-3,2.26)(-2.75,2.26)(-1.18,1.2)(1.02,2.26)(1.66, 2)(2.2,0)(1.66, -2)(1.02,-2.26)(-1.18,-1.2)(-2.75,-2.26)(-3,-2.26)
\psdots[dotstyle=*](1.66, 2)
\psline[linestyle=dashed,dash=5pt 5pt](1.66, 1)(-3,1)
\psline[linestyle=dashed,dash=5pt 5pt](1.66, -1)(-3,-1)
\psline{->}(1.66,2)(2.5,2)
\psline{->}(1.66,2)(2.3,2.66)
\psline(1.66,2)(1.66,-2)
\uput[22](1.94, 2.12){\small $\theta$}
\uput[0](2.5,2){$i_1$}
\uput[45](2.3,2.66){$\nu$}
\uput[-135](1.66, 0){\small $ L_{\alpha}(t)$}
\psline{->}(0, 0)(0, 1)
\uput[0](0, 0.5){$c(\alpha)$}
\uput[0](3,0){$x_1$}
\uput[90](-3,3){$u$}
\uput[-45](2.2,0){$d(t)$}
\psdots[dotstyle=*](2.2,0)
\psdots[dotstyle=*](1.66, 0)
\end{pspicture*}}
		\caption{The cylinder of radius $c(\alpha)$.}
		\label{fig: Height Estimate}
\end{figure}
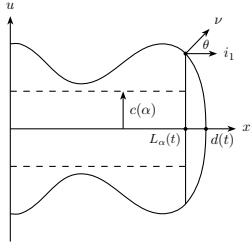

\begin{proof} 
Here $\alpha = \frac{1}{\cos \theta}$, and 
$$ C_t^{\alpha} = \left \{ \mathbf{x}(p,t) \in M_t : \frac{1}{\alpha} < \left \langle \nu, \mathbf{i}_1 \right \rangle \leq 1 \right\} \, , $$
and $R_t^{\alpha} = M_t \backslash C_t^{\alpha}$. From the Assumption \ref{A1}, we know that $u > c(\alpha) $ in $R_t^{\alpha}\, .$ As $u\mid_{\partial C_t^{\alpha}} \leq R  $ and $|{\rho'}| \geq \tan\left(\frac{\pi}{2} - \theta\right) =\frac{1}{\sqrt{\alpha^2 - 1}} $ in $C_t^{\alpha}$ we have 
$$ d(t) - \tilde{u}\mid_{\partial C_t^{\alpha}} \leq R \tan\theta = R \sqrt{\alpha^2 -1} \, . $$
\noindent
Assume there exists a length $l_1$ such that $\tilde{u}\mid_{R_t^{\alpha}} > l_1  $. Then 
\[ |M_0| \geq |M_t| >n \omega_n c^{n-1}(\alpha) l_1 \, , \]
\noindent
where now we compared $\vert M_t \vert $ to the surface area of an $n$ dimensional cylinder of radius $c(\alpha)$ and length $l_1$. 
Having $l_1 > \frac{|M_0|}{n\omega_n c^{n-1}(\alpha)}$ would contradict the fact that the evolution decreases the surface area. Therefore
\[ \tilde{u} <  \frac{|M_0|}{n\omega_n c^{n-1}(\alpha)} + R  \sqrt{\alpha^2 -1}  = \colon l \, .\]
\end{proof}

\noindent
Next we show that the length of the generating curve is bounded.
 
\begin{lemma}\label{Lemma_Length_of_the_generating_curve_is_bounded}
Assume $M_t$ to be a smooth, rotationally symmetric hypersurface, with a radius function $\rho(x_1, t) > 0 $ for $x_1 \in [0, d(t))$. Then there exists a constant $c_*$, such that
$$ \int_{0}^{d(t)} \sqrt{1 + \rho'^2} \dx_1 \leq c_* \, , $$
independent of time.
\end{lemma}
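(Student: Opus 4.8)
The plan is to split the generating curve into the portion lying in the cylindrical part $R_t^{\alpha}$ (for a fixed choice of $\alpha$, say $\alpha=\sqrt2$) and the portion lying in the cap $C_t^{\alpha}$, and to bound each piece separately using the height estimates already established in Lemmas~\ref{HeightEstimate1} and~\ref{HeightEstimate2} together with the area-decrease property of the flow. The length we must bound is $\int_0^{d(t)}\sqrt{1+\rho'^2}\,dx_1$, which is precisely the length of the profile curve from the supporting plane to the pole.

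First I would handle the cap $C_t^{\alpha}$. There, by definition of $L_\alpha(t)$, the inclination $\langle\nu,\mathbf{i}_1\rangle$ stays between $1/\alpha$ and $1$, so $|\rho'|$ is bounded below by $1/\sqrt{\alpha^2-1}$ but more importantly the curve is a graph over the $x_1$-axis on an interval of length $d(t)-\tilde u|_{\partial C_t^{\alpha}}$, which by the argument in Lemma~\ref{HeightEstimate2} is at most $R\sqrt{\alpha^2-1}$. On that interval $\sqrt{1+\rho'^2}\le 1+|\rho'|$, and since the cap is monotone in $\rho$ (the profile goes from $\rho=\rho|_{\partial C_t^{\alpha}}\le R$ down to $\rho=0$ at the pole without turning back, as it is a graph over $x_1$ there), $\int_{C_t^{\alpha}}|\rho'|\,dx_1 = \rho|_{\partial C_t^{\alpha}}\le R$. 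Hence the cap contributes at most $R\sqrt{\alpha^2-1}+R$ to the length.

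Next I would bound the length in the cylindrical part $R_t^{\alpha}$. Here the key is that $\langle\nu,\mathbf{i}_1\rangle\le 1/\alpha$, equivalently the profile curve is steep relative to the $x_1$-axis is \emph{not} guaranteed, so instead I parametrise $R_t^{\alpha}$ by arclength $s$ of the generating curve and observe that the lateral surface area of the corresponding surface of revolution is $|R_t^{\alpha}| = n\omega_n\int \rho^{n-1}\,ds$ (up to the standard constant $n\omega_n$ for the area of the unit $(n-1)$-sphere). On $R_t^{\alpha}$ we have the lower height bound $u=\rho>c(\alpha)$ from Assumption~\ref{A1}, so $\rho^{n-1}>c(\alpha)^{n-1}$, giving
\[
|M_0|\ \ge\ |M_t|\ \ge\ |R_t^{\alpha}|\ =\ n\omega_n\int_{R_t^{\alpha}}\rho^{n-1}\,ds\ >\ n\omega_n\,c(\alpha)^{n-1}\,\mathrm{length}(R_t^{\alpha}),
\]
whence $\mathrm{length}(R_t^{\alpha}) < \dfrac{|M_0|}{n\omega_n c(\alpha)^{n-1}}$. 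Adding the two contributions gives
\[
\int_0^{d(t)}\sqrt{1+\rho'^2}\,dx_1\ <\ \frac{|M_0|}{n\omega_n c(\alpha)^{n-1}} + R\sqrt{\alpha^2-1} + R\ =:\ c_*,
\]
independent of $t$.

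The main obstacle I anticipate is the bookkeeping around the cap: one must be careful that $C_t^{\alpha}$ really is a graph over the $x_1$-axis (true as long as the flow is smooth and no pinch-off occurs, which is exactly what Assumption~\ref{A1} secures) so that the monotonicity of $\rho$ along the cap can be used to convert $\int|\rho'|$ into the single endpoint value $\rho|_{\partial C_t^{\alpha}}\le R$; without monotonicity this term could a priori blow up. A secondary point is to make sure the decomposition $[0,d(t)] = (\text{projection of }R_t^{\alpha})\cup(\text{projection of }C_t^{\alpha})$ is a genuine disjoint-up-to-endpoints partition of the base interval, which again follows from the choice of $L_\alpha(t)$ as the \emph{first} plane (coming from $\Pi$ toward the pole) at which the inclination $1/\alpha$ is attained. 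Everything else is the area-comparison trick already used twice in this section.
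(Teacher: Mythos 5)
Your proposal is correct and follows essentially the same route as the paper: split the curve at $L_\alpha(t)$, bound the cylindrical part by comparing the surface area $\int \rho^{n-1}\,ds \geq c(\alpha)^{n-1}\cdot\mathrm{length}$ with $|M_0|$ via Assumption~\ref{A1}, and bound the cap's length by its $x_1$-extent plus the monotone variation of $\rho$ (at most $R$). The only differences are cosmetic — you bound the cap's $x_1$-extent by $R\sqrt{\alpha^2-1}$ where the paper uses $l$, and you write the correct dimensional constant $n\omega_n$ where the paper writes $2\pi$.
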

\begin{proof}
Let us divide $M_t$ into $R_t^{\alpha}$ and $C_t^{\alpha}$ for any $\alpha >1$. As the surface area is decreasing under the flow $$ \vert M_t \vert \leq \vert M_0 \vert \, , $$
$$ 2\pi \int_{0}^{d(t)} \rho^{n-1}\sqrt{1 + \rho'^2} \dx_1 \leq \vert M_0 \vert \, , $$
$$ 2\pi \int_{0}^{L_{\alpha}(t)} \rho^{n-1}\sqrt{1 + \rho'^2} \dx_1 \leq 2\pi \int_{0}^{d(t)} \rho^{n-1}\sqrt{1 + \rho'^2} \dx_1  \leq \vert M_0 \vert \, . $$
\noindent
From the Assumption \ref{A1} 
$$ 2 \pi c^{n-1}(\alpha) \int_{0}^{L_{\alpha}(t)}\sqrt{1 + \rho'^2} \dx_1 \leq \vert M_0 \vert \, , $$
$$  \int_{0}^{L_{\alpha}(t)}\sqrt{1 + \rho'^2} \dx_1 \leq \frac{\vert M_0 \vert} {2\pi c^{n-1}(\alpha)} \, . $$
\noindent
We can estimate the length of the generating curve of the cap $C_t^{\alpha}$ by $l +R$. Therefore
$$ \int_{0}^{d(t)} \sqrt{1 + \rho'^2} \dx_1  \leq \frac{\vert M_0 \vert} {2\pi c^{n-1}(\alpha)} + l + R =\colon c_* \, . $$
\end{proof}
\noindent
We now derive an   \emph{a priori} estimate for $h(t)$ for any solution of the graphical equation. 

\section{Estimates on $h$}
\begin{lemma}\label{estimate on $h$}
Assume $M_t$ to be a smooth, rotationally symmetric hypersurface, with a radius function $\rho(x_1, t) > 0 $ for $x_1 \in [0, d(t))$.  Then there is a constant $c_1$ such that $0 \leq h(t) \leq c_1$ throughout the flow.
\end{lemma}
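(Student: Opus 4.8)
\roughproof
The plan is to reduce both $\int_{M_t} H\, dg_t$ and $|M_t|$ to one-dimensional integrals over $x_1\in[0,d(t)]$ and then to integrate by parts. Writing $\sigma := |S^{n-1}| = n\omega_n$, for any axially symmetric function $f$ on $M_t$ one has $\int_{M_t} f\, dg_t = \sigma\int_0^{d(t)} f\,\rho^{n-1}\sqrt{1+\dot\rho^2}\dx_1$, so that inserting the explicit mean curvature and using $\sqrt{1+\dot\rho^2}\,H = -\ddot\rho/(1+\dot\rho^2) + (n-1)/\rho$ gives
\[ \int_{M_t} H\, dg_t = \sigma\int_0^{d(t)}\Big(-\frac{\rho^{n-1}\ddot\rho}{1+\dot\rho^2} + (n-1)\rho^{n-2}\Big)\dx_1 . \]

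First I would prove $h(t)\ge 0$. Since $-\ddot\rho/(1+\dot\rho^2) = \frac{d}{dx_1}\big(-\arctan\dot\rho\big)$, an integration by parts turns the first term into the boundary contribution $\big[-\rho^{n-1}\arctan\dot\rho\big]_0^{d(t)}$ plus $(n-1)\int_0^{d(t)}\rho^{n-2}\dot\rho\arctan\dot\rho\dx_1$. The boundary term vanishes: at $x_1=0$ the right-angle (Neumann) condition forces $\dot\rho=0$, hence $\arctan\dot\rho=0$; at the pole $x_1=d(t)$ one has $\rho=0$ while $\arctan\dot\rho$ stays bounded, so for $n\ge2$ it is zero. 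Therefore
\[ \int_{M_t} H\, dg_t = \sigma(n-1)\int_0^{d(t)}\rho^{n-2}\big(1+\dot\rho\arctan\dot\rho\big)\dx_1 , \]
which is nonnegative because $\dot\rho$ and $\arctan\dot\rho$ always have the same sign; dividing by $|M_t|>0$ yields $h(t)\ge0$.

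For the upper bound I would estimate this last integral from above and $|M_t|$ from below, both uniformly in time. Using $\dot\rho\arctan\dot\rho\le\frac{\pi}{2}|\dot\rho|\le\frac{\pi}{2}\sqrt{1+\dot\rho^2}$ and $1\le\sqrt{1+\dot\rho^2}$, together with $\rho=u<R$ from Lemma \ref{HeightEstimate1} (so that $\rho^{n-2}<R^{n-2}$ for $n\ge2$) and then the length bound of Lemma \ref{Lemma_Length_of_the_generating_curve_is_bounded},
\[ \int_{M_t} H\, dg_t \le \sigma(n-1)\Big(1+\frac{\pi}{2}\Big)R^{n-2}\int_0^{d(t)}\sqrt{1+\dot\rho^2}\dx_1 \le \sigma(n-1)\Big(1+\frac{\pi}{2}\Big)R^{n-2}c_* . \]
For the denominator, the enclosed volume $V=\omega_n\int_0^{d(t)}\rho^n\dx_1$ is preserved, and $\rho<R$ gives $\rho^{n-1}\ge\rho^n/R$, so
\[ |M_t| = \sigma\int_0^{d(t)}\rho^{n-1}\sqrt{1+\dot\rho^2}\dx_1 \ge \frac{\sigma}{R}\int_0^{d(t)}\rho^n\dx_1 = \frac{\sigma V}{\omega_n R} > 0 . \]
Combining the two estimates, $h(t)=|M_t|^{-1}\int_{M_t}H\,dg_t$ is bounded by a constant $c_1$ depending only on $n$, $R$, $c_*$ and $V$.

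The argument is elementary; the one point needing care is the vanishing of the integration-by-parts boundary term, which is precisely where the Neumann condition at $\Pi$ and the smoothness of $M_t$ at the pole are used, together with the remark that Lemmas \ref{HeightEstimate1} and \ref{Lemma_Length_of_the_generating_curve_is_bounded} and the volume $V$ are all independent of $t$, so that $c_1$ is too. I do not anticipate a genuine obstacle here, since the substantive estimates were set up in the preceding section.
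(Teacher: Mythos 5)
Your proposal is correct and follows essentially the same route as the paper: reduce $\int_{M_t}H\,dg_t$ and $|M_t|$ to one-dimensional integrals in $x_1$, integrate by parts using $\frac{\ddot\rho}{1+\dot\rho^2}=\frac{d}{dx_1}(\arctan\dot\rho)$ with the boundary terms vanishing because of the Neumann condition at $x_1=0$ and $\rho=0$ at the pole, deduce $h\ge 0$ from $\dot\rho\arctan\dot\rho\ge 0$, and obtain the upper bound from $\rho<R$ together with the length bound $c_*$ of Lemma \ref{Lemma_Length_of_the_generating_curve_is_bounded}. The only (harmless) deviation is the lower bound on $|M_t|$: you derive it directly from the preserved volume via $\rho^{n-1}\ge\rho^n/R$, whereas the paper invokes the isoperimetric inequality $V^{\frac{n}{n+1}}\le c|M_t|$; both yield a time-independent positive lower bound, so the conclusion is the same.
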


\begin{proof}
Following \cite{MA2} we parametrize $ M_t $ by its radius function $\rho \in C^\infty([0, d(t)))$, then clearly
\[ H =  -\frac{\ddot{\rho}}{ (1+\dot{\rho}^2)^{\frac{3}{2}}}  + \frac{n-1}{\rho (1+ \dot\rho^2)^{\frac{1}{2}}} \, .\]

\noindent
From Lemma \ref{Lemma_Length_of_the_generating_curve_is_bounded}, we know that $\int_{0}^{d(t)} \sqrt{1+ \dot{\rho}^2} dx_1 \leq c_* $. Our proof follows the ideas of \cite{MA1} the difference being the boundary term when integrating by parts. For the sake of completeness we will include it here. For the second term of
\[ h(t) = \frac{1}{|M_t|} \int_{M_t}(k + (n-1)p ) dg_t, \hspace{2mm} t\in [0, T) \, , \]
we have
\[ 0 \leq \frac{n-1}{|M_t|} \int_0^{d(t)} \rho ^{n-2} (x_1, t) dx_1 \leq \frac{(n-1)R^{n-2}l}{|M_t|} \, , \] 
since $\rho \leq R$ and $d(t) \leq l$ by Lemma \ref{HeightEstimate1} and \ref{HeightEstimate2}.

\noindent
For the first term note that $\frac{\ddot{\rho}}{ (1+\dot{\rho}^2)} = \frac{d}{dx_1}(\arctan \dot\rho)$. Therefore 
\begin{align}\label{hBoundaryValues}
\int_{M_t} k dg_t &= - \int_0^{d(t)} \frac{d}{dx_1}(\arctan \dot\rho) \rho^{n-1} dx_1  \notag \\
& = (\arctan \dot\rho) \rho^{n-1}\mid_{x_1 = 0 } - (\arctan \dot\rho) \rho^{n-1}\mid_{x_1 = d(t) } + (n-1) \int_0^{d(t)} (\arctan \dot\rho) \dot\rho \rho^{n-2} dx_1 \\
& = (n-1) \int_0^{d(t)} (\arctan \dot\rho) \dot\rho \rho^{n-2} dx_1 \, ,\notag
\end{align}

\noindent
as $\arctan \dot \rho = 0$ when $x_1 = 0 $, and $\rho(d(t)) = 0 $ at the pole. 
As $0\leq (\arctan \dot\rho) \dot\rho \leq \frac{\pi}{2}|\dot\rho | \leq \frac{\pi}{2} \sqrt{1+\dot\rho^2}$ we obtain
\begin{align*}
0 \leq \frac{1}{|M_t|} \int_{M_t} k dg_t & \leq \frac{(n-1)}{|M_t|} \frac{\pi}{2} \int_0^{d(t)} \sqrt{1+\dot\rho^2} \rho^{n-2} dx_1 \\
& \leq \frac{(n-1)R^{n-2}}{|M_t|} \frac{\pi}{2}\int_0^{d(t)} \sqrt{1+\dot\rho^2} dx_1 \\
&\leq \frac{(n-1)c_*R^{n-2}}{|M_t|} \frac{\pi}{2}  \, ,
\end{align*}
where we have used Lemma \ref{Lemma_Length_of_the_generating_curve_is_bounded}. 

\noindent
From the isoperimetric inequality and the fact that the flow decreases surface area we know that 
\[ V ^{\frac{n}{n+1}} < c|M_t| \leq c|M_0|. \]
Hence combining these arguments we conclude
\[ 0\leq \frac{\int H dg}{\int dg} \leq c_1. \]

\end{proof}

\section{Evolution equations and gradient estimates}

The maximum principle for non-cylindrical or time dependent domains is discussed in \cite{LS1}. We use that version of the maximum principle in this paper. 

\begin{lemma}\label{Lemma_EvEq}We have the following evolution equations:
\begin{itemize}
\item[(i)] $\left( \frac{d}{dt} - \Delta \right) u= \frac{h}{v} - \frac{n-1}{u}\, ;  $
\item[(ii)] $\left( \frac{d}{dt} - \Delta \right) \tilde{u}= \frac{h}{\tilde{v}}\, ;  $
\item[(iii)] $\left( \frac{d}{dt} - \Delta \right) v= -|A|^2v + (n-1)\frac{v}{u^2} - \frac{2}{v}|\nabla v|^2\, ;  $
\item[(iv)] $\left( \frac{d}{dt} - \Delta \right) \tilde{v}= -|A|^2\tilde{v} - \frac{2}{\tilde{v}}|\nabla \tilde{v}|^2\, ;  $
\item[(v)] $\left( \frac{d}{dt} - \Delta \right) H= (H-h)|A|^2\, ;  $
\item[(vi)] $ \left( \frac{d}{dt} - \Delta \right) |A|^2= -2|\nabla A|^2 + 2 |A|^4 - 2hC\, ;  $
\item[(vii)] $ \left( \frac{d}{dt} - \Delta \right)  p =  |A|^2p + 2q^2(k-p) - hp^2 \, ;  $
\item[(viii)] $\left(  \frac{d}{dt} - \Delta \right) k =  |A|^2k -2(n-1)q^2(k-p) - hk^2 \, ;  $
\end{itemize}
where $C = g^{ij}g^{kl}g^{mn}h_{ik}h_{lm}h_{nj} $, with $ g^{ij}$ denoting the components of the inverse of the first fundamental form, and $h_{ij}$ those of the second fundamental form.
\end{lemma}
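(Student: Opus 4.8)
The plan is to read everything off from the first variation formulas for the flow $\partial_t\mathbf{x} = -(H-h)\nu$, using crucially that $h=h(t)$ is constant in space. Thus $\partial_t g_{ij} = -2(H-h)h_{ij}$, $\Delta\mathbf{x} = -H\nu$ as usual, $\partial_t\nu = \nabla^M(H-h) = \nabla^M H$ (the $h$ drops out, so the unit normal evolves exactly as under mean curvature flow), and, via Simons' identity, $\partial_t h^i_j = \Delta h^i_j + |A|^2 h^i_j - h\, h^i_k h^k_j$. I would also record the algebraic consequences of axial symmetry from Section 2: the unit normal lies in the plane spanned by $\mathbf{i}_1$ and $\omega$, hence $\langle\nu,\mathbf{i}_1\rangle^2 + \langle\nu,\omega\rangle^2 = 1$, i.e.\ $\langle\nu,\mathbf{i}_1\rangle^2 = 1 - v^{-2}$; the Weingarten map is diagonal in the adapted frame with eigenvalue $k$ (simple) and $p$ (multiplicity $n-1$), so $H = k+(n-1)p$ and $|A|^2 = k^2+(n-1)p^2$; and $p^2+q^2 = u^{-2}$.

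For (ii), $\tilde u = \langle\mathbf{x},\mathbf{i}_1\rangle$ is linear in $\mathbf{x}$, so $\partial_t\tilde u = -(H-h)\langle\nu,\mathbf{i}_1\rangle$ and $\Delta\tilde u = \langle\Delta\mathbf{x},\mathbf{i}_1\rangle = -H\langle\nu,\mathbf{i}_1\rangle$; subtracting gives $(\partial_t-\Delta)\tilde u = h\langle\nu,\mathbf{i}_1\rangle = h/\tilde v$. For (i) I would compute with $u^2 = \sum_{i\ge 2}x_i^2$, using $\Delta x_i = -H\langle\nu,\mathbf{i}_i\rangle$ and $|\nabla^M x_i|^2 = 1-\langle\nu,\mathbf{i}_i\rangle^2$; expanding $\Delta(u^2) = 2u\Delta u + 2|\nabla^M u|^2$, substituting $|\nabla^M u|^2 = 1-v^{-2}$, and using $\langle\nu,\mathbf{i}_1\rangle^2 = 1-v^{-2}$ to collapse terms, one obtains $\Delta u = (n-1)/u - H/v$; together with $\partial_t u = -(H-h)/v$ this is (i).

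For (v) and (vi): tracing $\partial_t h^i_j = \Delta h^i_j + |A|^2 h^i_j - h\, h^i_k h^k_j$ gives $\partial_t H = \Delta H + (H-h)|A|^2$, which is (v); contracting $\partial_t|A|^2 = 2h^j_i\,\partial_t h^i_j$ against $\Delta|A|^2 = 2h^j_i\Delta h^i_j + 2|\nabla A|^2$ gives $(\partial_t-\Delta)|A|^2 = -2|\nabla A|^2 + 2|A|^4 - 2h\, h^j_i h^i_k h^k_j = -2|\nabla A|^2 + 2|A|^4 - 2hC$, which is (vi). These are the volume-preserving versions of Huisken's identities, the only new feature being the term linear in $h$. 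Equations (iii), (iv) are the gradient-function evolutions: (iv) is the classical one for graphs in the fixed ``vertical'' direction $\mathbf{i}_1$, and since $\partial_t\nu = \nabla^M H$ carries no $h$, the computation is identical to the pure mean curvature flow case.

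Equations (iii), (vii) and (viii) are the ones carrying genuinely rotational terms, and I expect the bookkeeping here to be the main obstacle. The cleanest route is to pass to the graph-over-axis parametrization, where $v = \sqrt{1+\dot\rho^2}$, $p = (\rho v)^{-1}$, $k = -\ddot\rho\, v^{-3}$, $H = -\ddot\rho\, v^{-3} + (n-1)(\rho v)^{-1}$, and the flow becomes the scalar equation $\partial_t\rho = \ddot\rho\, v^{-2} - (n-1)\rho^{-1} + hv$. Differentiating $v$, $p$, $k$ in $t$ and in $x_1$, substituting this equation, and repeatedly using $p^2+q^2 = u^{-2}$ produces the stated right-hand sides: the term $(n-1)v/u^2$ in (iii) and the $q^2(k-p)$ terms in (vii)--(viii) come from the $(n-1)$ ``latitudinal'' curvatures of the cylinder through a point (equivalently, from the non-constancy of $\omega$, or from differentiating the adapted frame), while the $hp^2$, $hk^2$ terms come from the $hv$ summand in the $\rho$-equation. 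The delicate points are the asymmetry between the coefficient $+2$ in (vii) and $-2(n-1)$ in (viii) and keeping all signs straight; I would cross-check by verifying that (vii), (viii) are consistent with (v) through $H = k+(n-1)p$ and with (vi) through $|A|^2 = k^2+(n-1)p^2$, and by testing on the stationary cylinder and sphere. Throughout, these are local identities on the smooth part of $M_t$, so the free boundary on $\Pi$ and the pole on the axis play no role here; they enter only later, through the boundary version of the maximum principle of \cite{LS1}.
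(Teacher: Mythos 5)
Your proposal is essentially correct and, item by item, either coincides with or legitimately replaces the paper's argument. For (ii) and (iv) you argue exactly as the paper does (direct differentiation, plus the Ecker--Huisken identity \cite{HE1} for $\Delta\tilde v$, using that $\partial_t\nu=\nabla H$ because $h$ is spatially constant). For (i), (iii), (v), (vi) the paper simply cites \cite{MA1} and \cite{GH2}, whereas you rederive them: your computation of $\Delta u = (n-1)/u - H/v$ via $u^2=\sum_{i\ge2}x_i^2$ is correct, and the mixed-Weingarten evolution $\partial_t h^i_j=\Delta h^i_j+|A|^2h^i_j-h\,h^i_kh^k_j$ does trace to (v) and contract to (vi), so this part is a self-contained improvement over the citations. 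Your derivation of (viii) from (v) and (vii) via $H=k+(n-1)p$ and $|A|^2=k^2+(n-1)p^2$ is precisely what the paper does. The genuine divergence is in (iii) and (vii): the paper follows Huisken's intrinsic computation \cite{GH3}, writing $p=(u^{-2}-q^2)^{1/2}$ and using the pointwise relations $\nabla_iu=\delta_{i1}qu$, $\nabla_ip=\delta_{i1}q(p-k)$, $\nabla_iq=\delta_{i1}(q^2+kp)$, so that all gradient terms cancel identically; you instead propose differentiating the scalar graph equation $\partial_t\rho=\ddot\rho v^{-2}-(n-1)\rho^{-1}+hv$.

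That graph route can be made to work, but as sketched it leaves unaddressed exactly the point where the coefficients $+2$ versus $-2(n-1)$ are most easily lost: the $\frac{d}{dt}$ in the lemma is the material derivative under the purely normal motion $\partial_t\mathbf{x}=-(H-h)\nu$, while differentiating $v,p,k$ at fixed $x_1$ is the derivative under the radial graph parametrization; the two differ by a tangential transport term, and likewise $\Delta$ is the Laplace--Beltrami operator, namely $v^{-2}\partial^2_{x_1}$ plus first-order terms in $\dot\rho,\ddot\rho,\rho$, not the bare second derivative. These corrections are precisely what generate the $-\tfrac{2}{v}|\nabla v|^2$ term in (iii) and make (vii), (viii) come out free of gradient terms, so they must be carried explicitly. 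Also, your proposed cross-checks are weaker than they look: consistency with (v) only constrains the combination in which the $q^2(k-p)$ terms of (vii) and (viii) appear (it forces the coefficients to satisfy $b+(n-1)a=0$, nothing more), and consistency with (vi) requires the decomposition of $|\nabla A|^2$ for an axially symmetric surface, which itself contains $q^2(k-p)^2$ contributions. Either track these corrections carefully, or do what the paper does and compute intrinsically with the relations listed above.
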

\begin{proof}
(i) and (iii) are proved in \cite{MA1}, (v) and (vi) are in \cite{GH2}. 

\smallskip

\noindent
(ii) For $\tilde{u} = \left\langle \mathbf{x}, \mathbf{i}_1 \right\rangle$ we have 
$$\frac{d}{dt} \tilde{u}  = \left\langle \frac{d}{dt}\mathbf{x}, \mathbf{i}_1 \right \rangle 
 = -(H-h)\langle \nu , \mathbf{i}_1 \rangle \, , $$
and
$$ \Delta \tilde{u} = \langle \Delta \mathbf{x}, \mathbf{i}_1 \rangle 
= -H \langle \nu , \mathbf{i}_1 \rangle \, , $$
so that
$$ \left( \frac{d}{dt} - \Delta \right) \tilde{u}  = h \langle \nu , \mathbf{i}_1 \rangle \, . $$

\smallskip

\noindent
(iv) For $\tilde{v} = \langle \nu, \mathbf{i}_1\rangle^{-1}$ we have 

$$ \frac{d}{dt} \tilde{v} = -\tilde{v}^2 \left\langle \frac{d}{dt}\nu, \mathbf{i}_1 \right\rangle = -\tilde{v}^2 \langle \nabla H, \mathbf{i}_1\rangle \, .$$

\noindent
The evolution equation follows from the following well known  identity (see for example \cite{HE1})
\[ \Delta \tilde{v} =  -\tilde{v}^2 \langle \nabla H, \mathbf{i}_1\rangle + \tilde{v}|A|^2 + 2\tilde{v}^{-1} \nabla{\tilde{v}}^2 \, .\]

\noindent 
Following the same way as in \cite{GH3}, for (vii) we have ,

\begin{align*}
\frac{d}{dt} p &= \frac{d}{dt}(u^{-2} -q^{2})^{1/2} \\
	&=\Delta p + p^{-1}|\nabla p|^2 + p^{-1}|\nabla q|^2 -3 p^{-1}u^{-4}|\nabla u|^2 +p^{-1}u^{-4} \\
	& \hspace{2mm} -qp^{-1}\left(|A|^2q + q\left(p^2-q^2 -2kp\right)\right) -hu^{-2} +hq^2 \,.
\end{align*}
Equation (vii) follows from the following relations:
\begin{align}\label{eq:T1.4}
\nabla_i u &= \delta_{i1}qu  \, , \hspace{10mm} \nabla_1 \left\langle \nu, \mathbf{i}_1 \right\rangle =kpu \, ,
	\hspace{10mm}\nabla_i q = \delta_{i1}(q^2 + kp)  \, , \\ 
 \nabla_i p &= \delta_{i1}q(p-k) \, ,\hspace{5mm} |A|^2 = k^2 +(n-1) p^2 \, , \hspace{5mm} u^{-4} = p^4 + 2p^2q^2 + q^4 \,.
\end{align}

\smallskip
\noindent
The evolution equation for $H$ was derived in \cite{GH2} , and (viii) follows from (v), (vii), and the fact that $H =k+(n-1)p$.
\end{proof}

We proceed to obtain gradient estimates in the different parts of the surface: for the cap using the vertical graph equation and part (iv) from above Lemma \ref{Lemma_EvEq}, and for the cylindrical part away from the cap, using the evolution equation (iii) in Lemma \ref{Lemma_EvEq}.

\noindent
The quantities $\tilde{u}$ and $\tilde{v}$ are used on the cap. 

\begin{lemma}\label{gradient estimate}
The gradient estimate
\[\tilde{v} \leq \alpha \]
holds on the cap $C_t^{\alpha} $. In addition there is a constant $c_2(\alpha)$, such that
\[v \leq c_2(\alpha)  \]
for the cylindrical part $R_t^{\alpha}$.
\end{lemma}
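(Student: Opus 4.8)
The estimate $\tilde v\le\alpha$ on $C_t^\alpha$ is essentially definitional. First I would observe that on $C_t^\alpha$ one has $\langle\nu,\mathbf i_1\rangle>1/\alpha$ by construction, so $\tilde v=\langle\nu,\mathbf i_1\rangle^{-1}<\alpha$; the non-strict inequality $\tilde v\le\alpha$ holds in particular on $\overline{C_t^\alpha}$, including on the separating plane $L_\alpha(t)$ where $\langle\nu,\mathbf i_1\rangle=1/\alpha$ so $\tilde v=\alpha$. No PDE argument is needed here — it is the definition of the cap via the inclination angle, together with smoothness of the flow which keeps $L_\alpha(t)$ well-defined and separating the chosen angle from the pole.

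The second estimate, $v\le c_2(\alpha)$ on $R_t^\alpha$, is the real content. The plan is a parabolic maximum principle argument on the time-dependent domain $R_t^\alpha$, using the evolution equation (iii) from Lemma~\ref{Lemma_EvEq}, $(\partial_t-\Delta)v=-|A|^2v+(n-1)v/u^2-2|\nabla v|^2/v$. The bad term is $(n-1)v/u^2$; but on $R_t^\alpha$ Assumption~\ref{A1} gives $u>c(\alpha)$, so that term is bounded above by $(n-1)v/c(\alpha)^2$, giving $(\partial_t-\Delta)v\le (n-1)v/c(\alpha)^2 - 2|\nabla v|^2/v$. This is not immediately a differential inequality with a sign-favourable right side, so I would instead work with $w=v\,e^{-\lambda t}$ for a suitable $\lambda$, or more robustly use a spatial cutoff. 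The cleanest route: bound $v$ on the parabolic boundary of $R_t^\alpha$ and then localize. On the portion of $\partial R_t^\alpha$ lying on $L_\alpha(t)$ we have $v$ related to $\tilde v=\alpha$: since both $v=\langle\nu,\omega\rangle^{-1}=\sqrt{1+\dot\rho^2}$ and $\tilde v=\langle\nu,\mathbf i_1\rangle^{-1}$ measure the same inclination of the generating curve (one via the vertical graph, one via the horizontal graph), on $L_\alpha(t)$ where the slope is $|\dot\rho|=1/\sqrt{\alpha^2-1}$ we get $v=\sqrt{1+1/(\alpha^2-1)}=\alpha/\sqrt{\alpha^2-1}$, a constant depending only on $\alpha$. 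On the other boundary component, the free boundary on $\Pi$, the Neumann condition forces $\nu\perp\mathbf i_1$, i.e. the generating curve meets $\Pi$ horizontally, so $\dot\rho=0$ there and $v=1$. Thus $v$ is bounded on the parabolic boundary by a constant $c(\alpha)$, and at $t=0$ by a constant depending on $M_0$.

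With the boundary values controlled, I would apply the maximum principle of \cite{LS1} for non-cylindrical domains to $v$ on $\bigcup_t R_t^\alpha\times\{t\}$. The term $-2|\nabla v|^2/v\le 0$ is harmless; the term $-|A|^2v\le0$ is harmless; the only obstruction is $(n-1)v/u^2\le (n-1)v/c(\alpha)^2$. To absorb this I would consider $\bar v=e^{-(n-1)t/c(\alpha)^2}v$, which satisfies $(\partial_t-\Delta)\bar v\le 0 - (2/v)|\nabla\bar v|^2\,e^{\cdots}\le 0$ modulo the gradient term, so $\sup_{R_t^\alpha}\bar v$ is non-increasing unless the max is attained on the parabolic boundary; since the time horizon is what matters, this only gives a time-dependent bound. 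The genuinely time-independent bound instead comes from combining with the height estimates: on $R_t^\alpha$ the generating curve is a graph over an interval of length $\le d(t)\le l$ (Lemma~\ref{HeightEstimate2}) with $\rho\in[c(\alpha),R]$ (Assumption~\ref{A1} and Lemma~\ref{HeightEstimate1}), and the length is bounded (Lemma~\ref{Lemma_Length_of_the_generating_curve_is_bounded}); so the expected argument is a contradiction/compactness or a direct ODE argument along the generating curve using these a priori bounds, or alternatively a Stampacchia-type iteration showing that a supersolution barrier of the form $c(\alpha)+($integral curvature quantity$)$ dominates $v$. The main obstacle I anticipate is precisely extracting a time-\emph{independent} constant $c_2(\alpha)$ rather than one growing like $e^{\lambda t}$: this forces one to use not just (iii) but also the area decrease and the height bounds to close the estimate, presumably by showing that if $v$ were large on $R_t^\alpha$ then a long nearly-vertical stretch of the generating curve would appear inside the region $\{c(\alpha)\le\rho\le R\}$, contradicting the length bound $c_*$.
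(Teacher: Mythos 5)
Your first part (the bound $\tilde v\le\alpha$ on the cap) is fine and matches the paper: the bound is definitional from the inclination-angle description of $C_t^\alpha$, and the evolution equation $\left(\frac{d}{dt}-\Delta\right)\tilde v\le 0$ from Lemma \ref{Lemma_EvEq}(iv) is consistent with it. Your identification of the boundary values of $v$ on $\partial R_t^\alpha$ (namely $v=1$ on $\Pi$ by the Neumann condition and $v=\alpha/\sqrt{\alpha^2-1}$ on $L_\alpha(t)$) is also exactly what the paper records.

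However, for the main estimate $v\le c_2(\alpha)$ on $R_t^\alpha$ there is a genuine gap: you never produce an argument that yields a time-independent constant. You correctly note that the naive use of (iii) with the rescaling $e^{-\lambda t}v$ only gives a bound growing in time, but the alternatives you then sketch do not close the estimate. In particular, the suggestion that a large value of $v$ would force ``a long nearly-vertical stretch'' of the generating curve contradicting the length bound of Lemma \ref{Lemma_Length_of_the_generating_curve_is_bounded} does not work: the length bound controls $\int\sqrt{1+\rho'^2}\,dx_1$, i.e.\ $v$ only in an integral sense along the profile, and a single point (or a very short interval) of arbitrarily steep slope contributes negligible length, so no pointwise gradient bound follows from it. The missing idea, taken from \cite{MA1} (Proposition 4), is to apply the maximum principle not to $v$ but to the auxiliary quantity $u^2v$, whose weight $u^2$ exactly neutralizes the troublesome term $(n-1)v/u^2$: one computes
\begin{align*}
\left(\frac{d}{dt}-\Delta\right)\left(u^2v\right)
&= -|A|^2u^2v+(n-1)v+2uh-2(n-1)v-2v|\nabla u|^2-\frac{2}{v}\nabla v\cdot\nabla\left(u^2v\right)\\
&\le 2hu-(n-1)v \, ,
\end{align*}
modulo the gradient term, and since $h\le c_1$ and $u\le R$ the right-hand side is negative whenever $v>\frac{2c_1R}{n-1}$. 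Hence an interior maximum of $u^2v$ cannot exceed $\frac{2c_1R^3}{n-1}$, and together with the initial data and the boundary values you already computed this gives a uniform, time-independent bound on $u^2v$; Assumption \ref{A1} ($u>c(\alpha)$ on $R_t^\alpha$) then converts this into $v\le c_2(\alpha)$. Without this (or an equivalent) choice of test function, your plan does not prove the lemma.
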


\begin{proof}
Note that 
\[ \left( \frac{d}{dt} - \Delta \right) \tilde{v} \leq 0 \, ,\]
so that by the maximum principle  $ \tilde{v} \leq \max (\max_{C_0^{\alpha}} \tilde{v}, \max_{\partial C_t^{\alpha}} \tilde{v})$. By definition in  $C^{\alpha}_t $ we have  $\tilde{v} \leq \alpha $, and this is supported by the evolution equation!

\noindent
From the assumption we know that $u > c(\alpha)$ in $R^{\alpha}_t. $ 
As in (\cite{MA1}, Proposition $4$) we calculate, 
\begin{align*}
\left( \frac{d}{dt} - \Delta \right)u^2 v &= -|A|^2u^2v + (n-1)v + 2uh - 2(n-1)v -2v|\nabla u|^2  \\
						 & \hspace{2mm}	-  \frac{2}{v}\nabla v \nabla (u^2 v) \\
						 & \leq 2hu - (n-1)v \, . 
\end{align*}
If $ v > \frac{2c_1 R}{n-1}$ the right hand side is negative, and proceeding as in \cite{MA1} we conclude $v \leq c_2(\alpha)$ in $R^{\alpha}_t$. It is important to note that on the boundary of $R^{\alpha}_t$, either $v=1$ (along the intersection with $\Pi$), or $v=\frac{\alpha}{\sqrt{ {\alpha}^2 - 1}}$. Thereby, we have bounds for $v$ and $\tilde{v}$ in $R^{\alpha}_t$ and $C^{\alpha}_t$ respectively. 
\end{proof}

\begin{remark}
\noindent
These gradient bounds guarantee that $R^{\alpha}_t$ remains a graph. As $R^{\alpha}_t$ remains a graph for all $\alpha >1$, we see that  $M_t\backslash P(t)$ remains a graph as well. 
\end{remark}

\noindent
 As the height of the graph is bounded, for the minimum $d(t)$  we find a lower bound from
\[ V = \int_0^{d(t)} \omega_n \rho^n(x) dx_1 \leq  \omega_n  R^n  \int_0^{d(t)} dx_1  = \omega_n  R^n  d(t) \,.\]
  
\begin{lemma} \label{HPositive}
 Let $\mathbf{x}_0 (t)$ be a boundary point of $C_t^{\sqrt{2}}$, which without loss of generality we can assume to lie on the generating curve and such that $ \langle \nu(\mathbf{x}_0 (t)), \mathbf{i}_1 \rangle = \frac{1}{\sqrt 2} $ (with some abuse of notation for the corresponding normal $\nu(\mathbf{x}_0(t))$). Then $H(\mathbf{x}_0 (t)) \geq 0$ for $0 \leq t \leq T_{max} \leq \infty $. 
\end{lemma}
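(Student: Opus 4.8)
The plan is to argue by contradiction at a first time when the mean curvature at the boundary point $\mathbf{x}_0(t)$ of the cap $C_t^{\sqrt 2}$ becomes negative, and to derive a contradiction from the evolution equation for $H$ together with a Hopf-lemma argument at that boundary point. Concretely, suppose for the sake of contradiction that there is a first time $t_0$ and a point on $\partial C_t^{\sqrt 2}$ where $H=0$, with $H<0$ just afterwards; by Lemma \ref{Lemma_EvEq}(v), $\left(\frac{d}{dt}-\Delta\right)H=(H-h)|A|^2$, and since $h\geq 0$ by Lemma \ref{estimate on $h$}, at a spatial minimum of $H$ with $H\le 0$ the right-hand side is $\le 0$. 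So $H$ cannot attain a new negative interior minimum; the minimum of $H$ on $R_t^{\sqrt2}\cup(\text{adjacent piece})$ must be attained either on the initial surface, on the obstacle boundary $\partial\Pi\cap M_t$, or at the moving interface $\partial C_t^{\sqrt 2}$. The first I would like to exclude by a hypothesis on the initial data (or show it enters the constant); the second is handled by the Neumann condition at $\Pi$, where a reflection/Hopf argument shows $H$ cannot have a boundary minimum with inward normal derivative of the wrong sign. That leaves the interface $\partial C_t^{\sqrt 2}$ as the only candidate, which is exactly the point $\mathbf{x}_0(t)$ in question.

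The key geometric input is that along $\partial C_t^{\sqrt2}$ the inclination is fixed, $\langle\nu,\mathbf{i}_1\rangle=\tfrac1{\sqrt2}$, so $\dot\rho=-1$ there (the generating curve has slope $\pm1$), which means $k=-\ddot\rho/(1+\dot\rho^2)^{3/2}$ and $p=1/(\rho\sqrt2)>0$ are constrained. I would use the one-dimensional structure: on the cylindrical part $R_t^{\sqrt2}$, $H$ as a function of arclength satisfies an ODE coming from (v), and the fact that $L_{\sqrt2}(t)$ is chosen so that the inclination $1/\sqrt2$ is achieved nowhere else between $\Pi$ and $P(t)$ means the slope $|\dot\rho|<1$ strictly on the interior of $R_t^{\sqrt2}$ and $|\dot\rho|\ge 1$ on $C_t^{\sqrt2}$. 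At $\mathbf{x}_0(t)$ the generating curve is transitioning from $|\dot\rho|<1$ to $|\dot\rho|\ge1$, so $\ddot\rho\le 0$ there (the slope $\dot\rho$ is decreasing through $-1$ moving toward the pole), hence $k=-\ddot\rho/(1+\dot\rho^2)^{3/2}\ge 0$; combined with $p>0$ this gives $H=k+(n-1)p>0$, or at least $\ge 0$, directly. This is really the heart of the matter and I would present it as the main computation.

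The main obstacle I anticipate is making the "transition of slopes" argument rigorous: $L_{\sqrt2}(t)$ is only defined as the location where $\langle\nu,\mathbf i_1\rangle=1/\sqrt2$ closest to the pole, and a priori the generating curve could oscillate or be tangent to slope $-1$ from one side in a degenerate way, so the sign of $\ddot\rho$ at $\mathbf x_0(t)$ is not automatic. I would handle this by invoking smoothness of the flow on $M_t\setminus P(t)$ (from the gradient estimates of Lemma \ref{gradient estimate}, which guarantee $R_t^{\sqrt2}$ stays a graph) and the defining property of $L_{\sqrt2}(t)$: on the $\Pi$-side of $\mathbf x_0(t)$ we have $|\dot\rho|<1$ and on the pole side $|\dot\rho|\ge1$, so $|\dot\rho|$ has a one-sided minimum equal to $1$ at $\mathbf x_0(t)$ from the cylindrical side, forcing $\frac{d}{dx_1}|\dot\rho|\le 0$ from the left, i.e. $\ddot\rho\le0$ (since $\dot\rho<0$ near there). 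An alternative, cleaner route, if the slope argument proves too delicate, is a pure maximum-principle argument: combine (v) with the Hopf boundary point lemma applied on the domain $R_t^{\sqrt2}$ whose moving boundary component is $\partial C_t^{\sqrt2}$, using that $H$ cannot achieve a negative minimum in the interior or on $\Pi$, to conclude that if $H(\mathbf x_0(t))<0$ then $\partial_\nu H<0$ there, and then derive a contradiction with the explicit relation $\nabla_1\langle\nu,\mathbf i_1\rangle=kpu$ and the constrained geometry along $L_{\sqrt2}(t)$. I expect the final write-up to use whichever of these two arguments the authors found shortest.
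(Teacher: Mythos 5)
Your first route (a parabolic maximum principle for $H$ via Lemma \ref{Lemma_EvEq}(v) plus a Hopf boundary argument) cannot establish this lemma as stated: $\mathbf{x}_0(t)$ is an interior point of the evolving surface, and the interface $\partial C_t^{\sqrt2}$ is part of the parabolic boundary of any domain such as $R_t^{\sqrt2}$ on which you would run the maximum principle, so the argument is circular precisely at the point of interest; moreover the lemma carries no hypothesis on the sign of $H$ on $M_0$, and the step you defer to ``a hypothesis on the initial data'' is exactly the hypothesis that is not available. The paper uses no parabolic machinery here at all: the proof is a fixed-time integral argument. Assuming $H(\mathbf{x}_0(t))<0$, one takes the adjacent axially symmetric region $C_t^{\sqrt2,H^-}\subset C_t^{\sqrt2}$ on which $H<0$, with endpoints of $x_1$-coordinates $a(t)$ (at $\mathbf{x}_0(t)$) and $b(t)$, writes $\int H\,dg$ over it as $\int_{a(t)}^{b(t)}\bigl(-\tfrac{d}{dx_1}(\arctan\dot\rho)\,\rho^{n-1}+(n-1)\rho^{n-2}\bigr)dx_1<0$, notes the second term is positive, and deduces $\arctan\dot\rho(a(t))<\arctan\dot\rho(b(t))$, i.e. $-\tfrac\pi4<\arctan\dot\rho(b(t))$, contradicting $\arctan\dot\rho<-\tfrac\pi4$ inside the cap.

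Your second route --- showing $k\ge0$ at $\mathbf{x}_0(t)$ from the sign of $\ddot\rho$ and concluding $H=k+(n-1)p\ge0$ --- is genuinely different from the paper's proof and can be made correct (indeed shorter), but not with the justification you give. The gap is the claim that on the $\Pi$-side of $\mathbf{x}_0(t)$ one has $|\dot\rho|<1$: the cylindrical part $R_t^{\sqrt2}$ is merely the complement of the connected component containing the pole, and nothing prevents $|\dot\rho|\ge1$ arbitrarily close to $\mathbf{x}_0(t)$ on the $\Pi$-side --- this is exactly the degenerate tangency you yourself flag, and your proposed fix leans on the unjustified side. The repair is to use only the pole side, where the choice of $L_{\sqrt2}(t)$ does give strict information: for $x_1\in(a(t),d(t))$ one has $\langle\nu,\mathbf{i}_1\rangle>\tfrac1{\sqrt2}$, i.e. $\dot\rho(x_1)<-1=\dot\rho(a(t))$, so every right difference quotient of $\dot\rho$ at $a(t)$ is negative, and smoothness of the graph away from the pole gives $\ddot\rho(a(t))\le0$, hence $k(\mathbf{x}_0(t))\ge0$ and $H(\mathbf{x}_0(t))\ge(n-1)p(\mathbf{x}_0(t))\ge0$. (There is also a small sign slip: near $a(t)$ one has $|\dot\rho|=-\dot\rho$, so a one-sided minimum of $|\dot\rho|$ forces $\tfrac{d}{dx_1}|\dot\rho|\ge0$ there, not $\le0$; your conclusion $\ddot\rho\le0$ is the right one.) Comparing the two: the paper's integral argument never needs any pointwise statement about $\ddot\rho$ at the interface, which is why it is immune to the oscillation issue, while your pointwise argument, once anchored on the pole side where the cap's defining property holds, is more elementary and purely local.
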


\begin{proof}

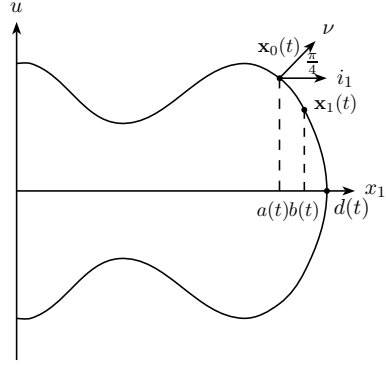
\begin{figure}[!h]
   \centering
   \scalebox{0.75}{\pagestyle{empty}
\newrgbcolor{xdxdff}{0.49 0.49 1}
\psset{xunit=1.0cm,yunit=1.0cm,dotstyle=o,dotsize=3pt 0,linewidth=0.8pt,arrowsize=3pt 2,arrowinset=0.25}
\begin{pspicture*}(-4,-3.58)(4.04,3.72)
\psline{->}(-3,-3)(-3,3)
\psline{->}(-3,0)(3,0)
\pscurve(-3,2.26)(-2.75,2.26)(-1.18,1.2)(1.02,2.26)(1.66, 2)(2.1, 1.44)(2.5,0)(2.1, -1.44)(1.66, -2)(1.02,-2.26)(-1.18,-1.2)(-2.75,-2.26)(-3,-2.26)
\psdots[dotstyle=*](1.66, 2)
\psdots[dotstyle=*](2.1, 1.44)
\psline[linestyle=dashed,dash=5pt 5pt](1.66, 2)(1.66, 0)
\psline[linestyle=dashed,dash=5pt 5pt](2.1, 1.44)(2.1, 0)
\psline{->}(1.66,2)(2.5,2)
\psline{->}(1.66,2)(2.3,2.66)
\uput[22](1.94, 2.12){\small $\frac{\pi}{4}$}
\uput[0](2.5,2){$i_1$}
\uput[45](2.3,2.66){$\nu$}
\uput[-90](1.55, 0){\small $a(t)$}
\uput[90](1.66,2.2){\small $\mathbf{x}_0(t)$}
\uput[15](2.1, 1.44){\small $\mathbf{x}_1(t)$}
\uput[-90](2.1, 0){\small $b(t)$}
\uput[0](3,0){$x_1$}
\uput[90](-3,3){$u$}
\uput[-45](2.5,0){$d(t)$}
\psdots[dotstyle=*](2.5,0)
\end{pspicture*}}
		\caption{$\mathbf{x}_0 (t)$ - the boundary point of $C_t^{\sqrt{2}}$.}
		\label{fig: Boundary of H positive}
\end{figure}

Suppose $H(\mathbf{x}_0 (t)) <0 $, then by continuity there is a connected region $ C_t^{\sqrt{2},H^-} \subset  C_t^{\sqrt{2}}$, with $\mathbf{x}_0(t) \in \partial C_t^{\sqrt{2},H^-} $, which clearly can be chosen to be axially symmetric, and such that  $H \mid_{C_t^{\sqrt{2},H^-} } <  0 $. Let $\mathbf{x}_1 (t )$ denote the other boundary point along the generating curve in $C_t^{\sqrt{2},H^-} \subset C_t^{\sqrt{2}} $, and let $a(t )= \langle \mathbf{x}_0 (t) , \mathbf{i}_1 \rangle $, $b(t) = \langle \mathbf{x}_1 (t) , \mathbf{i}_1 \rangle $ denote the $x_1$ coordinate of $\mathbf{x}_0 (t)$, $\mathbf{x}_1 (t)$, respectively. Then
\begin{equation*}
0 > \int_{C_t^{\sqrt{2},H^-} } H dg = \int_{a(t)}^{b(t)} \left( -\frac{\ddot{\rho}}{1+\dot{\rho}^2} \rho^{n-1} + (n-1)\rho^{n-2} \right)dx_1 \, .
\end{equation*}
The second term being positive, that means that the first is negative, and given the bounds on the radius we find
\begin{equation*}
\int_{a(t)}^{b(t)}  \left(-\frac{\ddot{\rho}}{1+\dot{\rho}^2} \right) dx_1 = \int_{a(t)}^{b(t)} \left(- \frac{d}{dx_1}(\arctan \dot{\rho})  \right) dx_1< 0 \, . 
\end{equation*}
This results in $\arctan \dot{\rho} (a(t))  <   \arctan \dot{\rho} (b(t))$ and by the choice of $a(t)$, 
$-\frac{\pi}{4} <  \arctan \dot{\rho} (b(t)) $. However, this is not possible in $C_t^{\sqrt{2}}$, where 
$ -\frac{\pi}{2} \leq \arctan \dot{\rho} < -\frac{\pi}{4}$, contradicting our assumption and therefore $H(\mathbf{x}_0 (t)) \geq 0 $.
\end{proof}

\section{Curvature estimates}
\begin{proposition} \label{eq:3.1}There is a constant $c_2$ depending only on the initial hypersurface, such that  $\frac{k}{p} < c_2$, independent of time.
\end{proposition}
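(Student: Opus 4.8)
The plan is to bound the ratio $k/p$ by a maximum principle argument applied to the quantity $w = k/p$ on the region where $k > 0$ (on the region $k \le 0$ there is nothing to prove since $p > 0$ there). From the evolution equations (vii) and (viii) in Lemma~\ref{Lemma_EvEq} one computes the evolution equation for the quotient $w = k/p$. Using the standard rule $\left(\frac{d}{dt} - \Delta\right)(k/p) = \frac{1}{p}\left(\frac{d}{dt} - \Delta\right)k - \frac{k}{p^2}\left(\frac{d}{dt} - \Delta\right)p + \frac{2}{p}\nabla\left(\frac{k}{p}\right)\cdot\nabla p$, and substituting items (vii) and (viii), the $|A|^2 w$ terms cancel, leaving
\begin{equation*}
\left(\frac{d}{dt} - \Delta\right) w = \frac{2}{p}\nabla w \cdot \nabla p - 2(n-1)\frac{q^2}{p}(k-p) - \frac{k}{p^2}\cdot 2q^2(k-p) - hk^2\frac{1}{p} + h k \frac{k}{p^2} \cdot p \, ,
\end{equation*}
where the last two terms involving $h$ should combine to $0$ (since $-hk^2/p + hk^2/p = 0$), so the flow term drops out entirely. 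The remaining reaction term is $-2q^2(k-p)\left(\frac{n-1}{p} + \frac{k}{p^2}\right) = -2q^2(k-p)\frac{(n-1)p + k}{p^2} = -2\frac{q^2}{p}(w-1)\big((n-1) + w\big)$.

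The key observation is the sign: on the set where $w = k/p > 1$ (equivalently $k > p > 0$), the factor $(w-1)$ is positive and $(n-1)+w > 0$, so the reaction term $-2\frac{q^2}{p}(w-1)((n-1)+w)$ is $\le 0$. Hence $w$ satisfies $\left(\frac{d}{dt} - \Delta\right)w \le \frac{2}{p}\nabla w \cdot \nabla p$ on $\{w > 1\}$, which is an inequality to which the maximum principle applies (the gradient term is harmless, being linear in $\nabla w$). Therefore the maximum of $w$ is controlled by its maximum at $t=0$ and on the parabolic boundary. On the initial surface $M_0$, smoothness and compactness give $k/p \le c$ for some constant depending only on $M_0$. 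At the pole $P(t)$ one has $k = p$ by smoothness (the surface closes up roundly there), so $w = 1$; along the free boundary on $\Pi$ the Neumann condition must be checked to rule out a boundary maximum — this is where the boundary term in the argument, analogous to the one flagged in Lemma~\ref{estimate on $h$}, enters.

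The main obstacle I expect is precisely the boundary analysis along $\partial M_t \subset \Pi$: one must show that $w$ cannot attain a new interior-in-time maximum there, using the orthogonality (Neumann) condition to conclude $\nabla_\mu w \le 0$ for the outward conormal $\mu$, or else argue directly that $k$ and $p$ remain comparable on $\partial M_t$ by a separate barrier. A secondary point requiring care is that $p > 0$ everywhere away from where it could degenerate, but since $\rho > 0$ on $[0, d(t))$ and $\rho$ is bounded above by $R$ (Lemma~\ref{HeightEstimate1}), $p = 1/(\rho\sqrt{1+\dot\rho^2})$ stays bounded below by a positive constant on any region where $v$ is bounded, so the quotient is well-defined and the division in the evolution equation is legitimate on the cylindrical parts; near the pole one should instead use the known asymptotic $k/p \to 1$. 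Once the boundary terms are handled, the conclusion $k/p < c_2$ with $c_2 = c_2(M_0)$ follows immediately from the maximum principle for time-dependent domains cited from~\cite{LS1}.
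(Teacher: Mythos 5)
Your overall strategy---compute the evolution equation for $w=k/p$ from Lemma \ref{Lemma_EvEq} (vii), (viii) via the quotient rule and apply the maximum principle on the set where $w\geq 1$---is exactly the paper's argument. However, your computation of the $h$-terms is wrong. Substituting (vii) and (viii) into the quotient rule, the $h$-contribution is
\[
-\frac{hk^{2}}{p}-\frac{k}{p^{2}}\bigl(-hp^{2}\bigr)=-\frac{hk^{2}}{p}+hk=\frac{hk}{p}\,(p-k),
\]
not zero: your second term should be $+hk$, not $+hk^{2}/p$, so the flow term does not ``drop out entirely.'' The paper's equation reads
\[
\left(\frac{d}{dt}-\Delta\right)\frac{k}{p}
=\frac{2}{p}\,\nabla p\cdot\nabla\frac{k}{p}
+2\frac{q^{2}}{p^{2}}(p-k)\bigl((n-1)p+k\bigr)
+\frac{hk}{p}(p-k)\,.
\]
The argument still closes, but only because $0\leq h$ (Lemma \ref{estimate on $h$}): on $\{k/p\geq 1\}$ one has $k\geq p>0$, so $\frac{hk}{p}(p-k)\leq 0$ and both reaction terms are nonpositive. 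Thus the sign of $h$ is a needed ingredient that your claimed cancellation hides. (A minor slip: your combined reaction term carries a spurious factor $1/p$; it should be $-2q^{2}(w-1)\bigl((n-1)+w\bigr)$, though the sign analysis is unaffected.)

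The boundary step that you flag as ``the main obstacle'' and leave unresolved is precisely where your proof is incomplete, and the paper settles it in one line: since $M_t$ meets $\Pi$ orthogonally, one reflects the surface across $\Pi$; the reflected surface is smooth and axially symmetric, $k/p$ extends evenly across the reflection (and smoothly through the pole with value $1$, as you note), so the maximum principle is applied with no boundary data involved at all. Neither of your alternatives (establishing $\nabla_\mu w\leq 0$ at the boundary, or a separate barrier) is carried out, so the reflection argument is the missing idea; with it, and with the corrected $h$-term together with $h\geq 0$, your proof coincides with the paper's and yields $k/p\leq\max\bigl(1,\max_{M_0}k/p\bigr)$.
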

\begin{proof}
 We calculate from Lemma \ref{Lemma_EvEq}
\[ \frac{d}{dt} \left( \frac{k}{p} \right) = \Delta \frac{k}{p} +
	\frac{2}{p}\nabla_ip\nabla_i\left(\frac{k}{p}\right) +
	2\frac{q^2}{p^2}\left(p - k\right)\left((n-1)p + k\right) +
	\frac{hk}{p}\left(p-k\right) \, .\]
If $\frac{k}{p} \geq 1$ then $\frac{hk}{p}\left(p-k\right)<0$. This implies that
\begin{equation}
\frac{k}{p} \leq \max \left(1, \max_{M_0} \frac{k}{p} \right)  \, . 
\end{equation}
Note that for this consideration, the smooth function $k/p$ is defined over the whole surface, and in view of the orthogonality on the boundary, via a reflection argument there are no boundary data involved.
\end{proof}

\begin{proposition}\label{boundsforcurvature}
There exists a constant $c_3$ such that
$$ |A|^2 \leq c_3 . $$ 
\end{proposition}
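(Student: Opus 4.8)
The plan is to derive a differential inequality for $|A|^2$ that has a good sign once $|A|^2$ is large, and then close the argument with the maximum principle. Recall from Lemma \ref{Lemma_EvEq}(vi) that
\[ \left( \frac{d}{dt} - \Delta \right) |A|^2 = -2|\nabla A|^2 + 2|A|^4 - 2hC \, , \]
so the term $2|A|^4$ is the enemy. The key observation is that in the axially symmetric setting only two principal curvatures occur, $k$ and $p$ (with multiplicity $n-1$), so $|A|^2 = k^2 + (n-1)p^2$ and $C = k^3 + (n-1)p^3$. Moreover, by Proposition \ref{eq:3.1} we have the one-sided pinching $k \leq c_2 p$, and separately $p = 1/(\rho\sqrt{1+\dot\rho^2}) \leq 1/\rho$ which, combined with Assumption \ref{A1}, is bounded \emph{away from the pole} but blows up near the pole; conversely near the pole the cap is a vertical graph with $\tilde v \leq \sqrt 2$, so there $p$ stays bounded. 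Thus $p$ is globally bounded by some constant $c(\alpha)$ (choosing e.g. $\alpha = \sqrt2$): on $R_t^{\sqrt2}$ from the height bound, on $C_t^{\sqrt2}$ from the gradient estimate $\tilde v \leq \sqrt2$ together with $\rho$ bounded below there — wait, $\rho$ need not be bounded below on the cap, but on the cap one instead estimates $p$ directly from the vertical graph since the cap is a graph of $\tilde u$ over a ball in $\Pi$ and $|A|$ there is controlled by... this is exactly the point that needs care.

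So the structure I would follow is: (1) use Proposition \ref{eq:3.1} to reduce the problem to bounding $p$, since $|A|^2 = k^2 + (n-1)p^2 \leq (c_2^2 + n-1)p^2$; (2) bound $p$ by combining the two regions. On $R_t^{\alpha}$ the bound $p = (\rho\sqrt{1+\dot\rho^2})^{-1} \leq c(\alpha)^{-1}$ is immediate from Assumption \ref{A1}. On the cap $C_t^{\sqrt2}$, I would instead run a maximum-principle argument directly on $p$ (or on $u^2|A|^2$, or on $|A|^2$) using evolution equation (vii): $\left(\frac{d}{dt}-\Delta\right)p = |A|^2 p + 2q^2(k-p) - hp^2$. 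Here $p^2 + q^2 = u^{-2}$, and on the cap $u = \langle \mathbf x,\omega\rangle = \rho$ is small but $q = \langle\nu,\mathbf i_1\rangle u^{-1}$ — hmm, $u^{-1}$ blows up near the pole. The cleaner route: consider $w = u^2 |A|^2$ or work with the quantity $|A|^2$ bounded in terms of $H$ and the pinching. Actually, since $k \le c_2 p$ and $k \ge 0$ near that boundary point by Lemma \ref{HPositive} (and $H = k + (n-1)p \ge 0$ propagates inward on the cap by a similar argument), one has $|A|^2 \le \mathrm{const}\cdot H^2$ on the cap, and one can try a maximum principle on $H^2/(\text{something})$ or on $H$ itself using (v): $\left(\frac{d}{dt}-\Delta\right)H = (H-h)|A|^2$; combined with a bound on $h$ (Lemma \ref{estimate on $h$}) and a lower bound on $H$ one gets an upper bound on $H$ via the maximum principle as long as $|A|^2$ can be fed back — which is circular unless one uses the pinching to write $|A|^2 \le C H^2$ and then $H$ satisfies a reaction-diffusion inequality $\left(\frac{d}{dt}-\Delta\right)H \le C H^3$, which does \emph{not} immediately give a bound.

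Therefore the honest plan is a \textbf{Stampacchia/integral iteration} (De Giorgi–Nash–Moser) argument on $|A|^2$ in the cap region, as is standard for mean curvature flow of graphs: test the evolution inequality for $(|A|^2 - L)_+$ against powers of itself, use the bounds on $h$, the pinching $k \le c_2 p$ to control $C \le \mathrm{const}\, |A|^3$, the uniform bound on $|M_t|$ and on $d(t)$, and the boundary behaviour (Neumann condition, giving no boundary contributions after the reflection) to obtain $\sup |A|^2 < \infty$; alternatively, since the cap is a bounded graph with controlled gradient, invoke interior gradient/curvature estimates for the graphical mean curvature flow equation (Ecker–Huisken type) up to the free boundary. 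I expect \textbf{the main obstacle} to be precisely the behaviour near the pole $P(t)$: there $u \to 0$, so any argument using quantities like $u^{-1}$, $q$, or $p = \sqrt{u^{-2}-q^2}$ degenerates, and one must exploit that near the pole the surface looks like a spherical cap (vertical graph with $\tilde v \le \sqrt2$ and $H \ge 0$) to get an a priori bound on $p$ there, rather than from Assumption \ref{A1}. Handling the interface between "graph over the axis with $u$ bounded below" and "graph over $\Pi$ near the pole" cleanly — and making sure the maximum principle of \cite{LS1} applies on the moving domain with the correct boundary terms — is where the real work lies.
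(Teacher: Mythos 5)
There is a genuine gap, and it occurs at the very first step of your plan. Proposition \ref{eq:3.1} is a one-sided pinching: it bounds $k/p$ from above, i.e.\ $k \leq c_2\,p$, and says nothing about how negative $k$ can be. Since $k = -\ddot\rho\,(1+\dot\rho^2)^{-3/2}$ may be negative with arbitrarily large modulus while $p=(\rho\sqrt{1+\dot\rho^2})^{-1}$ stays bounded, the inequality $|A|^2 = k^2+(n-1)p^2 \leq (c_2^2+n-1)p^2$ that you use to ``reduce the problem to bounding $p$'' is simply not available globally. In the paper this pinching is used only at the boundary point of $R_t^{\sqrt2}$, where Lemma \ref{HPositive} gives $H\geq 0$, hence $k \geq -(n-1)p$, which together with $k\leq c_2 p$ yields $|k|\leq c\,p$ \emph{there} — enough to control boundary values in a maximum-principle argument, but not a pointwise bound on the whole surface. (Your parenthetical claim that $H\geq 0$ ``propagates inward on the cap by a similar argument'' is likewise not established in the paper and would itself require proof.) With the reduction gone, your step (2) on the cylindrical part (bounding $p$ by $c(\alpha)^{-1}$) no longer bounds $|A|^2$, so the whole cylindrical part is untreated, not just the cap.

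On the cap you candidly do not give an argument: you survey options (maximum principle on $p$ or $H$, a Stampacchia iteration, citing Ecker--Huisken interior estimates up to the free boundary) and correctly identify the degeneracy at the pole, but none of these is carried out, and the $H$-route is, as you note, circular. The idea you are missing is the localized test function of \cite{KE2} and \cite{MA1}: one bounds $g=|A|^2\varphi(v^2)$ on $R_t^{\sqrt2}$ and $\tilde g=|A|^2\varphi(\tilde v^2)$ on $C_t^{\sqrt2}$, with $\varphi(r)=r/(\lambda-\mu r)$, $\mu>\tfrac34$, $\lambda>\mu\max v^2$. The cutoff produces the good term $-2\mu g^2$, the cubic term $-2hC\varphi$ is absorbed by Young's inequality using Lemma \ref{estimate on $h$}, the only term involving $u^{-2}$ is controlled on $R_t^{\sqrt2}$ by Assumption \ref{A1}, and on the cap that term is absent altogether because the evolution of $\tilde v$ (Lemma \ref{Lemma_EvEq}(iv)) has no $(n-1)v/u^2$ contribution, while $\tilde v\leq\sqrt2$ keeps $\varphi(\tilde v^2)$ under control; this is exactly what neutralizes the pole, where your quantities $u^{-1}$, $q$, and $p$ degenerate. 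Boundary values for the maximum principle are supplied by Lemma \ref{HPositive} combined with the pinching at $\partial R_t^{\sqrt2}$, by $v=1$ on $\Pi$ with the Neumann reflection, and (for the cap) by the bound already obtained on the adjacent region. So the overall strategy of a maximum principle on a curvature quantity is in the spirit of the paper, but as written your proof both misuses the one-sided pinching and leaves the decisive localization argument unperformed.
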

\begin{proof}
We proceed as in  \cite{KE2} and  \cite{MA1} and calculate the evolution equation for the product $g = |A|^2 \varphi(v^2)$ in $R_t^{\sqrt{2}}$ , where $\varphi(r) = \frac{r}{\lambda - \mu r}$, with some constants $\lambda, \mu >0 $ and $v = \langle \nu, \omega \rangle ^{-1}$. From the evolution equation of $g$ we find the inequality
\[\left ( \frac{d}{dt}- \Delta \right) g \leq -2\mu g^2 -2\lambda \varphi v^{-3}\nabla v \cdot \nabla g - \frac{2\lambda \mu}{(\lambda-\mu v^2)^2}|\nabla v|^2 g -2hC \varphi(v^2) 
+ \frac{2(n-1)}{u^2}v^2 \varphi'|A|^2  \, .\]
We estimate the second last term as in \cite{MA1} using Young's inequality and obtain
\begin{align*}
-2hC\varphi(v^2) &\leq 2h|A|^3\varphi(v^2) \\
&\leq \frac{3}{2}|A|^4\varphi(v^2) + \frac{1}{2}h^4\varphi^{-2}(v^2) \\
&= \frac{3}{2}g^2 + \frac{1}{2}h^4\varphi^{-2}(v^2) \, .
\end{align*}
We choose $\mu > \frac{3}{4}$ and $\lambda > \mu \max v^2 $. As $\varphi' v^2= \frac{\lambda}{(\lambda-\mu v^2)^2}\varphi $ we have
\[ \frac{2(n-1)}{u^2}v^2 \varphi'|A|^2  = \frac{2(n-1)\lambda}{u^2(\lambda - \mu v^2)} g \, .\]
As $u >c\left(\frac{1}{\sqrt{2}}\right) = c_0$ in $R_t^{\sqrt{2}}$ we get 
\[ \frac{2(n-1)\lambda}{u^2(\lambda - \mu v^2)} g  \leq c_4 g \, . \] 
Therefore we have
\begin{align*}
\left ( \frac{d}{dt}- \Delta \right) g &\leq -c_5 g^2  + c_{6}g - c_{7}\nabla v \cdot \nabla g + c_{8}(h, \max v)  \\
& \leq -c_5 \left(g- \frac{c_{6}}{2c_5}\right)^2 - c_{7}\nabla v \cdot \nabla g  +  c_{9} \, .
\end{align*}
When $g > \frac{c_{6}}{2c_5} + \sqrt{\frac{c_{9}}{c_5}}$ we get the right hand side to be negative. On $\partial R_t^{\sqrt{2}}$ we have $H = k + (n-1)p \geq 0 $ by Lemma \ref{HPositive} and also as $\frac{k}{p }< c_2$ we get $\frac{|k|}{p} < c$ on the boundary point. Here we have

\[ |A|^2 = k^2 + (n-1)p^2 \leq (c^2 + n -1) p^2 \leq C \rho^{-2} \leq C c_0^{-2}  \, . \]
From the maximum principle 
\[ g \leq \max \left(\max_{R_0^{\sqrt{2}}} g,  \max_{\partial R_t^{\sqrt{2}}} |A|^2 \varphi(v^2) \right) \, .\]
As $\varphi(v^2)$ is bounded as $v$ is bounded we have a bound for $g$ in $R_t^{\sqrt{2}}$. 
\noindent
If we calculate the evolution equation for $\tilde{g} = |A|^2\varphi(\tilde{v}^2)$ on $C_t^{\sqrt{2}}$ then we get the same evolution equation without the last term on the righthand side. Thereby we get a bound for $\tilde{g}$ in the same way as above.  
\end{proof}

\begin{proposition} For each $m \geq 1$ there is $C_m$ such that
\[ |\nabla^m A|^2  \leq  C_m \, ,\]
uniformly on $M_t$ for $0 \leq t \leq T_{\max} \leq \infty $.
\end{proposition}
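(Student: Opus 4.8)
\roughproof
The plan is to prove the estimate by induction on $m$, combining the two a priori bounds already in hand --- $|A|^2 \leq c_3$ from Proposition~\ref{boundsforcurvature} and $0 \leq h(t) \leq c_1$ from Lemma~\ref{estimate on $h$} --- with the evolution equations for the iterated covariant derivatives of $A$, and running the maximum principle after removing the free boundary by reflection. The starting point is that, exactly as in Huisken's computation for mean curvature flow (\cite{GH2}) but retaining the additional term produced by the volume constraint (which is linear in the spatially constant function $h$), one has schematically
\[
\left( \frac{d}{dt} - \Delta \right) \nabla^m A = \sum_{i+j+k=m} \nabla^i A \ast \nabla^j A \ast \nabla^k A \; + \; h \sum_{i+j=m} \nabla^i A \ast \nabla^j A \, ,
\]
where $\ast$ denotes a sum of metric contractions, and hence
\[
\left( \frac{d}{dt} - \Delta \right) |\nabla^m A|^2 = -2|\nabla^{m+1}A|^2 + \!\!\sum_{i+j+k=m}\!\! \nabla^i A \ast \nabla^j A \ast \nabla^k A \ast \nabla^m A + h\!\!\sum_{i+j=m}\!\!\nabla^i A \ast \nabla^j A \ast \nabla^m A \, .
\]

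Before applying the maximum principle I would first double the surface: since $M_t$ meets $\Pi$ orthogonally along $\partial M_t$, the reflection $\widehat M_t = M_t \cup \sigma(M_t)$ of $M_t$ across $\Pi$ is a smooth closed axially symmetric hypersurface --- the pole $P(t)$ lies in the interior and is untouched by $\sigma$ --- it evolves again by volume-preserving mean curvature flow with the same average $h(t)$ by symmetry, and $|\nabla^m A|^2$ extends smoothly across $\partial M_t$. Thus it suffices to prove the bound on the closed manifold $\widehat M_t$, where no boundary terms appear and the usual parabolic maximum principle applies. (Alternatively one keeps the boundary and notes that the Neumann condition forces the conormal derivative of the quantities below to vanish on $\partial M_t$, which is enough for the maximum principle.)

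For the inductive step, assume $|\nabla^j A|^2 \leq C_j$ for all $j \leq m$, the case $j=0$ being Proposition~\ref{boundsforcurvature}. In the evolution equation for $|\nabla^m A|^2$ every curvature factor has order $\leq m$ and is therefore already bounded, so the reaction terms are bounded by a constant:
\[
\left( \frac{d}{dt} - \Delta \right) |\nabla^m A|^2 \leq -2|\nabla^{m+1}A|^2 + b_0 \, ,
\]
with $b_0 = b_0(C_0,\dots,C_m,c_1)$. In the evolution equation for $|\nabla^{m+1}A|^2$ the only contributions not controlled by the induction hypothesis together with Young's inequality are $|A|^2|\nabla^{m+1}A|^2$ and the volume-constraint term, of size $\leq c_1|A|\,|\nabla^{m+1}A|^2$; using $|A|^2 \leq c_3$ and $h \leq c_1$ these yield
\[
\left( \frac{d}{dt} - \Delta \right) |\nabla^{m+1}A|^2 \leq -2|\nabla^{m+2}A|^2 + b_1|\nabla^{m+1}A|^2 + b_2 \, .
\]
Setting $g = |\nabla^{m+1}A|^2 + (b_1+1)|\nabla^m A|^2$ and combining the two inequalities, the term $b_1|\nabla^{m+1}A|^2$ is absorbed by the $-2(b_1+1)|\nabla^{m+1}A|^2$ coming from the second summand, so that
\[
\left( \frac{d}{dt} - \Delta \right) g \leq -(b_1+2)|\nabla^{m+1}A|^2 + b_3 \leq -(b_1+2)\,g + b_4 \, ,
\]
where the last inequality uses $|\nabla^{m+1}A|^2 = g - (b_1+1)|\nabla^m A|^2 \geq g - (b_1+1)C_m$. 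The maximum principle on $\widehat M_t$ then gives $g \leq \max\big(\max_{\widehat M_0} g,\ b_4/(b_1+2)\big)$ for all $t$, hence $|\nabla^{m+1}A|^2 \leq C_{m+1}$. The base case $m=1$ is the same argument with $g = |\nabla A|^2 + (b_1+1)|A|^2$, using the evolution equation (vi) in the form $(\frac{d}{dt}-\Delta)|A|^2 = -2|\nabla A|^2 + 2|A|^4 - 2hC \leq -2|\nabla A|^2 + b_0'$ (since $|A|^2 \leq c_3$, $|C|\leq|A|^3$, $h\leq c_1$) in place of the estimate for $|\nabla^m A|^2$.

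The bulk of the work, and the only place where care is genuinely needed, is the bookkeeping: checking that the volume constraint contributes nothing worse than terms linear in the bounded function $h$ with curvature factors of order at most $m+1$, so the scheme above goes through unchanged --- here it is relevant that $h$ need not be differentiable in $t$, as it enters only as a bounded coefficient, which is all the maximum principle requires --- and verifying that the doubled hypersurface $\widehat M_t$ really is smooth and still solves the nonlocal flow, so that passing to a closed manifold is legitimate. All constants $C_m$ produced this way depend only on the initial hypersurface (through $\max_{M_0}|\nabla^j A|^2$, $c_1$ and $c_3$), so the bound is uniform in $t$ up to $T_{\max}$; combined with $|A|^2 \leq c_3$ it also shows, by the standard continuation criterion, that $T_{\max} = \infty$.
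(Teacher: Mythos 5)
Your proof is correct and is essentially the argument the paper invokes: the paper simply cites Theorem 4.1 of \cite{GH2}, which, given the uniform bounds on $|A|^2$ and $h$ already established, runs exactly this induction on the schematic evolution equations for $|\nabla^m A|^2$, absorbing the top-order reaction terms by adding a multiple of the lower-order quantity and applying the maximum principle. Your explicit doubling of the surface across $\Pi$ supplies the boundary treatment the paper leaves implicit, and is consistent with the reflection remark the paper itself makes in Proposition \ref{eq:3.1}.
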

\begin{proof}
Having obtained uniform bounds on $|A|^2$ and $h$ the proof is a repetition of that of Theorem 4.1 in \cite{GH2}.
\end{proof}

Thus we have long-time existence for the flow. 

\begin{corollary}
\[ T_{\max} = \infty \, .\]
\end{corollary}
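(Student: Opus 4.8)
The plan is to derive long-time existence as a standard consequence of the a priori estimates assembled in the preceding sections. The statement $T_{\max}=\infty$ follows from a continuation argument: if $T_{\max}<\infty$, then by the previous propositions all geometric quantities — $|A|^2$ and all covariant derivatives $|\nabla^m A|^2$, together with $h(t)$ and the height bounds $u<R$, $\tilde u\le l$ — remain uniformly bounded on $[0,T_{\max})$. The idea is to show that these bounds force the immersions $\mathbf{x}(\cdot,t)$ to converge smoothly to a limiting immersion $\mathbf{x}(\cdot,T_{\max})$ which is again a smooth, axially symmetric hypersurface satisfying the Neumann condition on $\Pi$ and Assumption \ref{A1}, and then to restart the flow from that surface using short-time existence, contradicting the maximality of $T_{\max}$.

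First I would record that the normal velocity $|H-h|$ is uniformly bounded (since $|A|^2\le c_3$ gives $|H|\le C$, and $h\le c_1$), so the surfaces $M_t$ stay within a fixed compact region of $\bigR^{n+1}$ and the immersions $\mathbf{x}(\cdot,t)$ are uniformly Lipschitz in $t$; hence $\mathbf{x}(\cdot,t)\to\mathbf{x}(\cdot,T_{\max})$ uniformly as $t\to T_{\max}$. Next, the uniform bounds on $|\nabla^m A|^2$ for every $m$, combined with interpolation and the evolution equation $\partial_t\mathbf{x}=-(H-h)\nu$, give uniform bounds on all space-time derivatives of $\mathbf{x}$ on $[0,T_{\max})$; by Arzel\`a--Ascoli the convergence $\mathbf{x}(\cdot,t)\to\mathbf{x}(\cdot,T_{\max})$ is in $C^\infty$, and the limit is a smooth immersion. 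One must also check the limit surface is nondegenerate: the lower bound $d(t)\ge V/(\omega_n R^n)$ from the volume constraint keeps the pole away from $\Pi$, the height estimates keep the cylindrical part bounded away from the axis, and the gradient estimates $v\le c_2(\alpha)$, $\tilde v\le\alpha$ persist in the limit, so $M_{T_{\max}}$ is still a graph over $[0,d(T_{\max}))$ off the pole, meets $\Pi$ orthogonally, and satisfies Assumption \ref{A1} with the same constants $c(\alpha)$.

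Finally I would invoke short-time existence for the volume-preserving mean curvature flow with Neumann boundary data on $\Pi$ (the same local existence theory used implicitly to define the flow on $[0,\epsilon)$ initially, together with the maximum principle for time-dependent domains from \cite{LS1}) applied with initial data $M_{T_{\max}}$: this produces a smooth solution on $[T_{\max},T_{\max}+\delta)$ for some $\delta>0$, which glues with the original flow to extend it past $T_{\max}$, contradicting the definition of $T_{\max}$ as the maximal existence time. Hence $T_{\max}=\infty$.

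I expect the main obstacle to be purely expository rather than mathematical: verifying carefully that the limiting surface $M_{T_{\max}}$ remains a legitimate initial datum for the flow — in particular that it does not pinch off along the axis and still satisfies Assumption \ref{A1} with time-independent constants — and that the short-time existence theory applies uniformly in the initial data within the class of surfaces satisfying the established bounds. Since all the relevant estimates in Sections 3--6 are explicitly independent of $t$, this causes no real difficulty, and the conclusion $T_{\max}=\infty$ follows.
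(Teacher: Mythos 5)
Your proposal is correct and is essentially the same argument the paper intends: the corollary is stated as an immediate consequence of the uniform, time-independent bounds on $h$, the height and gradient functions, $|A|^2$ and all $|\nabla^m A|^2$, via the standard continuation argument (smooth convergence to a limit surface at a finite $T_{\max}$, which still satisfies Assumption \ref{A1}, followed by short-time existence to restart the flow). You have simply written out in detail the step the paper leaves implicit, so there is nothing to add.
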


\section{Convergence to surfaces of constant mean curvature}
This is as in \cite{MA1}: Having long-time existence, Proposition 8 of \cite{MA1} gives convergence to a constant mean curvature surface, which in our case is axially symmetric. By the classification of the Delaunay surfaces \cite{DEL} it has to be a half-sphere.

\section{Other convergence results}
Using the same estimates with very few changes one can show that a compact,  axially symmetric surface  without boundary,  which encloses a volume $V$  and intersects the axis only at two endpoints throughout the flow, will converge to a sphere. We will only explain the parts that are different from the previous result. 

\subsection{Height Estimates}
The height estimates \ref{HeightEstimate1} and \ref{HeightEstimate2} change as follows. 

\begin{lemma} {The height function $u$ satisfies $u <R = \left(\frac{|M_0|}{2\omega_n} \right)^{\frac{1}{n}} $.}
\end{lemma}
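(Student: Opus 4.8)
The plan is to imitate the proof of Lemma~\ref{HeightEstimate1}, but to replace the comparison with the projection onto the supporting plane by a comparison with the orthogonal projection onto a hyperplane \emph{perpendicular to the axis of rotation}, taking advantage of the fact that a closed surface of revolution meeting the axis at two points covers such a hyperplane with multiplicity at least two. Concretely, suppose that at some time $t$ one has $u \geq R$ somewhere on $M_t$; writing $\rho_{\max}(t) = \max_{M_t} u$ for the largest radius attained by the generating curve, this means $\rho_{\max}(t) \geq R$. Let $\pi\colon \bigR^{n+1}\to\{x_1=0\}\cong\bigR^n$ be orthogonal projection along $\mathbf{i}_1$. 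Since the Jacobian of $\pi$ restricted to $M_t$ equals $|\langle\nu,\mathbf{i}_1\rangle|\leq 1$, the area formula gives
\[ |M_t| \;\geq\; \int_{\bigR^n} \#\bigl(\pi^{-1}(y)\cap M_t\bigr)\, d\mathcal H^n(y)\, , \]
with strict inequality, because $|\langle\nu,\mathbf{i}_1\rangle|<1$ away from the measure-zero set where $M_t$ is tangent to the axial direction (and near each pole $\nu\approx\pm\mathbf{i}_1$, so the deficit is genuinely positive).

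The key step is the multiplicity bound $\#(\pi^{-1}(y)\cap M_t)\geq 2$ for $\mathcal H^n$-a.e.\ $y$ in the open ball $B_{\rho_{\max}(t)}\subset\bigR^n$. Parametrising the generating curve from one pole to the other, its radius coordinate is continuous, vanishes at both endpoints, and attains the value $\rho_{\max}(t)$; by the intermediate value theorem every value $r\in(0,\rho_{\max}(t))$ is therefore attained at least twice, at distinct points of the curve lying on either side of a maximising point. By axial symmetry each such point of the generating curve corresponds to an $(n-1)$-sphere in $M_t$ that $\pi$ maps bijectively onto the sphere of radius $r$ in $\bigR^n$. Hence $\#(\pi^{-1}(y)\cap M_t)\geq 2$ whenever $0<|y|<\rho_{\max}(t)$, and the displayed chain of inequalities yields
\[ |M_t| \;>\; 2\int_{B_{\rho_{\max}(t)}} d\mathcal H^n(y) \;=\; 2\,\omega_n\,\rho_{\max}(t)^n \;\geq\; 2\,\omega_n R^n\, . \]
Combining this with the area monotonicity $|M_t|\leq|M_0|$ of the volume-preserving flow (as in \cite{MA1}) forces $R < \left(\frac{|M_0|}{2\omega_n}\right)^{1/n}$, contradicting any hypothesis to the contrary; this establishes the stated bound $u < R = \left(\frac{|M_0|}{2\omega_n}\right)^{1/n}$.

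I expect the only real subtlety to be the bookkeeping behind the multiplicity statement: one has to verify that the exceptional set of $y$ — points with $|y|=\rho_{\max}(t)$, points lying under a critical value of the radius function, and points lying under a self-intersection of the projected generating curve — is $\mathcal H^n$-null, so that the ``at least twice'' count is valid almost everywhere and the area formula applies cleanly. Since $M_t$ is smooth and compact this is routine (Sard's theorem plus the smooth coarea/area formula). No curvature information and nothing beyond area monotonicity is used, so the remaining modifications to the height bound for $\tilde u$ and to the length bound for the generating curve then carry over verbatim, only with this new value of $R$.
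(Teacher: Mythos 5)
Your proof is correct and is essentially the paper's own argument: the paper takes a plane perpendicular to the $x_1$-axis, splits $M_t$ into two pieces and projects both onto that plane, which is exactly your multiplicity-two count for the projection along $\mathbf{i}_1$ combined with area monotonicity. (One small slip in your justification of strictness: the area deficit comes from points where $|\langle\nu,\mathbf{i}_1\rangle|<1$, e.g.\ near the latitude of maximal radius where $\langle\nu,\mathbf{i}_1\rangle=0$, not from the poles where $\nu\approx\pm\mathbf{i}_1$ — but this is immaterial, since the contradiction already follows from the non-strict bound together with the strict choice of $R$.)
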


\begin{proof}

Assume there exists an $R$ such that $u_{M_t} \geq R$ at some given time $t$. Take a plane perpendicular to the $x_1$-axis and intersecting the surface. This plane divides the surface into two parts, and by projecting both parts onto the plane we find   
\[ |M_0| \geq |M_t| > 2\omega_n R^n \, . \]
Taking  \[ R > \left(\frac{|M_0|}{2\omega_n} \right)^{\frac{1}{n}} \]
would contradict the fact that the evolution decreases the surface area. 
\end{proof}

\begin{lemma} { Let $e(t)$ and $d(t)$ denote the two tips of the surface on the left hand and right hand side respectively. Then $d(t) - e(t) <l = \frac{|M_0|}{n\omega_n c_0^{n-1}} + 2 R  $.}
\end{lemma}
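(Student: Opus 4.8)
The plan is to mimic the argument of Lemma~\ref{HeightEstimate2}, but now accounting for the two caps at the tips $e(t)$ and $d(t)$ instead of a single cap meeting the plane $\Pi$. First I would split $M_t$ into three pieces: the ``left cap'' $C_t^{\alpha,e}$ containing the tip at $e(t)$, the ``right cap'' $C_t^{\alpha,d}$ containing the tip at $d(t)$, and the cylindrical part $R_t^{\alpha}$ in between, where one takes $\alpha=\sqrt{2}$ (so $|\dot\rho|\ge 1$ in each cap). On the cylindrical part, Assumption~\ref{A1} gives $u>c_0:=c(1/\sqrt2)$, i.e. the radius function stays above $c_0$, while at the two interface planes $L_\alpha(t)$ the height is bounded above by $R$ from the previous lemma.

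Next, as in Lemma~\ref{HeightEstimate2}, the slope bound $|\dot\rho|\ge 1$ on each cap together with $\rho\le R$ at the interface forces the $x_1$-extent of each cap to be at most $R$; hence the total $x_1$-length of the two caps is at most $2R$. For the cylindrical part, suppose for contradiction that its $x_1$-extent exceeds some $l_1$. Since $\rho>c_0$ there, comparing $|M_t|$ to the lateral area of a cylinder of radius $c_0$ and length $l_1$ gives
\[
|M_0|\ge |M_t| > n\,\omega_n\,c_0^{\,n-1}\,l_1 \, ,
\]
so $l_1 \le |M_0|/(n\omega_n c_0^{n-1})$ or else the area-decreasing property of the flow is contradicted. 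Adding the contribution of the two caps yields
\[
d(t)-e(t) < \frac{|M_0|}{n\omega_n c_0^{\,n-1}} + 2R =: l \, .
\]

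The only point requiring a little care—and the main (minor) obstacle—is verifying that the slope estimate really does control the $x_1$-extent of \emph{each} of the two caps separately, i.e. that the generating curve is genuinely a graph over $x_1$ on each cap up to its tip. This follows from the definition of $C_t^{\sqrt2}$ exactly as in the bounded case (the inclination $1/\alpha$ is achieved nowhere between the interface plane and the tip), applied once on each side; the orthogonal intersection with the axis at both tips is guaranteed by smoothness. With that in hand the estimate is identical to Lemma~\ref{HeightEstimate2} run twice, and no new ideas are needed.
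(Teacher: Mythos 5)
Your proposal is correct and follows essentially the same argument as the paper: decompose $M_t$ into the two caps and the cylindrical part, bound each cap's $x_1$-extent by $R$ using the slope bound $|\dot\rho|\ge 1$ (the paper does this for general $\alpha$, with the factor $\sqrt{\alpha^2-1}$, and your choice $\alpha=\sqrt2$ matches the constant $c_0$ in the statement), and bound the cylindrical part by comparing $|M_t|$ with the lateral area of a cylinder of radius $c_0$. No substantive differences.
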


\begin{proof}
As in Lemma \ref{HeightEstimate2} let $\alpha = \frac{1}{\cos \theta}$.
From the Assumption \ref{A1} we know that $u > c(\alpha)  $ in $R_t^{\alpha}\, .$ As $u\mid_{\partial C_t^{\alpha, i}} \leq R $ and $|\dot{\rho}| \geq \tan\left(\frac{\pi}{2} - \theta\right) $ in $C_t^{\alpha,i}$ for $i =\{1,2 \} \, ,$ we have 
$$ d(t) - \tilde{u}\mid_{\partial C_t^{\alpha,1}} \leq R \tan\theta = R \sqrt{\alpha^2 -1}  \, , \, \text{and} $$
$$\tilde{u}\mid_{\partial C_t^{\alpha,2}} -  e(t) \leq R \tan\theta = R \sqrt{\alpha^2 -1}  \, . $$
Assume there exists a length $l_1$ such that $\tilde{u}\mid{R_t^{\alpha}} > l_1 $. Then by the previous argument again we can say
\[ |M_0| \geq |M_t| >n \omega_n c^{n-1}(\alpha) l_1 \, , \]
where now we compared $|M_t|$ to the surface area of an $n$ dimensional cylinder of radius $c(\alpha)$ and length $l_1$. 
If $l_1 > \frac{|M_0|}{n\omega_n c^{n-1}(\alpha)}$ this would contradict the fact that the evolution decreases the surface area. Therefore
\[ \tilde{u} <  \frac{|M_0|}{n\omega_n c_0^{n-1}} + 2R \sqrt{\alpha^2 -1} \, .\]
\end{proof}

\begin{lemma}\label{Lemma_Length_of_the_generating_curve_is_bounded_2}
Assume $M_t$ to be a smooth, rotationally symmetric hypersurface, with a radius function $\rho(x_1, t) > 0 $ for $x_1 \in (e(t), d(t))$. Then there exists a constant $c_*$, such that
$$ \int_{0}^{d(t)} \sqrt{1 + \rho'^2} \dx_1 \leq c_* \, , $$
independent of time.
\end{lemma}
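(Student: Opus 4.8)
The plan is to run the argument of Lemma~\ref{Lemma_Length_of_the_generating_curve_is_bounded} almost verbatim, the only structural change being that the surface now has \emph{two} caps, one at each tip $d(t)$ and $e(t)$, in place of one cap together with a free boundary on $\Pi$. Fix $\alpha>1$ (for definiteness $\alpha=1/\cos\theta$, as in the height estimates of this section) and decompose $M_t = R_t^{\alpha}\cup C_t^{\alpha,1}\cup C_t^{\alpha,2}$ into the cylindrical part and the two caps adjacent to the tips. Since the flow decreases area, $|M_t|\le|M_0|$, and expressing the area of the surface of revolution through its radius function gives
\[
 n\omega_n \int_{R_t^{\alpha}} \rho^{n-1}\sqrt{1+\rho'^2}\,\dx_1 \;\le\; n\omega_n \int_{e(t)}^{d(t)} \rho^{n-1}\sqrt{1+\rho'^2}\,\dx_1 \;=\; |M_t| \;\le\; |M_0|\,.
\]
By Assumption~\ref{A1} we have $\rho>c(\alpha)$ on $R_t^{\alpha}$, hence
\[
 \int_{R_t^{\alpha}} \sqrt{1+\rho'^2}\,\dx_1 \;\le\; \frac{|M_0|}{n\omega_n\,c^{n-1}(\alpha)}\,.
\]

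Next I would bound the length of the generating curve over each cap. On $C_t^{\alpha,i}$ the slope $\rho'$ does not change sign: by the choice of the plane $L_\alpha(t)$ the inclination $1/\alpha$ is attained only on $\partial C_t^{\alpha,i}$ between that plane and the corresponding tip, so each cap is a graph over its $x_1$-interval reaching down to $\rho=0$ at the tip, and $\rho$ is monotone there. Therefore $\int_{C_t^{\alpha,i}}|\rho'|\,\dx_1$ equals the total variation of $\rho$ on that interval, which is at most $R$ by the height bound $\rho\le R$. Combining this with the bounds on the $x_1$-extent of the caps, $d(t)-\tilde u\mid_{\partial C_t^{\alpha,1}}\le R\sqrt{\alpha^2-1}$ and $\tilde u\mid_{\partial C_t^{\alpha,2}}-e(t)\le R\sqrt{\alpha^2-1}$ from the preceding lemma, and using $\sqrt{1+\rho'^2}\le|\rho'|+1$, each cap contributes at most $R+R\sqrt{\alpha^2-1}\le l+R$ to the generating-curve length, so the two caps together contribute at most $2(l+R)$.

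Adding the two estimates then gives
\[
 \int_{e(t)}^{d(t)} \sqrt{1+\rho'^2}\,\dx_1 \;\le\; \frac{|M_0|}{n\omega_n\,c^{n-1}(\alpha)} + 2(l+R) \;=:\; c_*\,,
\]
which is uniform in $t$, as required. I do not expect a genuine obstacle here: the estimate is a routine adaptation of Lemma~\ref{Lemma_Length_of_the_generating_curve_is_bounded}, and the only point needing a little care is the monotonicity of $\rho$ on each cap (equivalently, that each cap is a graph over its $x_1$-interval down to the tip), which, exactly as in the boundary case, is guaranteed by the definition of $L_\alpha(t)$.
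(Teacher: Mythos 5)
Your proposal is correct and follows essentially the same route as the paper: decompose $M_t$ into the cylindrical part and the two caps, bound the cylindrical contribution via the decreasing area together with the lower radius bound from Assumption~\ref{A1}, and bound each cap's generating-curve length by its horizontal extent plus the total variation of $\rho$ (at most $R$), yielding the same constant $c_* = \frac{|M_0|}{n\omega_n c^{n-1}(\alpha)} + 2(l+R)$ as the paper's $\frac{|M_0|}{2\pi c^{n-1}(\alpha)} + 2l + 2R$ up to the area normalization. Your extra care about monotonicity of $\rho$ on each cap (which also follows directly from $|\rho'|\ge 1/\sqrt{\alpha^2-1}>0$ there) only makes explicit what the paper leaves implicit.
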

\begin{proof}
The proof is the same as in Lemma \ref{Lemma_Length_of_the_generating_curve_is_bounded} after taking into account the two caps on either side. Here we would have
$$ \int_{e(t)}^{d(t)} \sqrt{1 + \rho'^2} \dx_1  \leq \frac{\vert M_0 \vert} {2\pi c^{n-1}(\alpha)} + 2l + 2R =\colon c_* \, .
$$
\end{proof}

\begin{lemma}\label{Estimates on h} {\bf (Estimates on $h$)}
Assume $M_t$ to be a smooth, rotationally symmetric hypersurface, with a radius function $\rho(x_1, t) > 0 $ for $x_1 \in (e(t), d(t))$.  Then there is a constant $c_1$ such that $0 \leq h(t) \leq c_1$ throughout the flow.
\end{lemma}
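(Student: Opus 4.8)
The plan is to mimic the proof of Lemma \ref{estimate on $h$} almost verbatim, carrying along the bookkeeping for the \emph{two} caps instead of one. Write
\[ h(t) = \frac{1}{|M_t|} \int_{M_t}(k + (n-1)p)\,dg_t = \frac{1}{|M_t|}\int_{e(t)}^{d(t)}\left( -\frac{\ddot\rho}{(1+\dot\rho^2)}\rho^{n-1} + (n-1)\rho^{n-2}\right)dx_1 . \]
For the $(n-1)p$-term the estimate is unchanged in spirit: since $0 \le \rho \le R$ and $d(t) - e(t) \le l$ by the two preceding lemmas, one gets $0 \le \frac{n-1}{|M_t|}\int_{e(t)}^{d(t)}\rho^{n-2}\,dx_1 \le \frac{(n-1)R^{n-2}l}{|M_t|}$.

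For the $k$-term, integrate by parts using $\frac{\ddot\rho}{1+\dot\rho^2} = \frac{d}{dx_1}(\arctan\dot\rho)$:
\[ \int_{M_t} k\,dg_t = (\arctan\dot\rho)\rho^{n-1}\big|_{x_1=e(t)} - (\arctan\dot\rho)\rho^{n-1}\big|_{x_1=d(t)} + (n-1)\int_{e(t)}^{d(t)}(\arctan\dot\rho)\dot\rho\,\rho^{n-2}\,dx_1 . \]
Here the key observation is that \emph{both} boundary terms vanish: at the two tips $e(t)$ and $d(t)$ the radius $\rho$ is zero (the surface meets the axis there), so $\rho^{n-1} = 0$ at each endpoint — exactly the same cancellation as in \eqref{hBoundaryValues}, just applied at two poles rather than one pole and one right-angle intersection. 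Then, as before, $0 \le (\arctan\dot\rho)\dot\rho \le \frac{\pi}{2}|\dot\rho| \le \frac{\pi}{2}\sqrt{1+\dot\rho^2}$, so using $\rho \le R$ and Lemma \ref{Lemma_Length_of_the_generating_curve_is_bounded_2},
\[ 0 \le \frac{1}{|M_t|}\int_{M_t} k\,dg_t \le \frac{(n-1)R^{n-2}}{|M_t|}\frac{\pi}{2}\int_{e(t)}^{d(t)}\sqrt{1+\dot\rho^2}\,dx_1 \le \frac{(n-1)c_* R^{n-2}}{|M_t|}\frac{\pi}{2} . \]

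Finally, combine the two bounds and control $|M_t|$ from below: the isoperimetric inequality together with area-decrease gives $V^{\frac{n}{n+1}} < c|M_t| \le c|M_0|$, so $|M_t|$ is bounded below by a positive constant and $0 \le h(t) \le c_1$ follows. I do not expect a genuine obstacle here — the only thing to check carefully is that the boundary terms in the integration by parts really do vanish at both tips, which they do because the surface is smooth and closes up on the axis at $e(t)$ and $d(t)$; this replaces the $\arctan\dot\rho = 0$ argument (valid only at a right-angle intersection with $\Pi$) used in the free-boundary case, so the lower bound $h(t) \ge 0$ in particular needs this closing-up rather than the Neumann condition.
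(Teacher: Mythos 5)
Your proposal is correct and follows essentially the same route as the paper: the paper likewise observes that the only change from Lemma \ref{estimate on $h$} is in the boundary terms of the integration by parts \eqref{hBoundaryValues}, which now vanish because $\rho = 0$ at both tips, with the remaining estimates (the $(n-1)p$-term, the length bound of Lemma \ref{Lemma_Length_of_the_generating_curve_is_bounded_2}, and the isoperimetric lower bound on $|M_t|$) carried over unchanged.
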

\begin{proof}
The only change to the proof of Lemma \ref{estimate on $h$} would be in the boundary values \ref{hBoundaryValues}. Here the new boundary values would be
\[(\arctan \dot\rho) \rho^{n-1}\mid_{x_1 = a(t) } - (\arctan \dot\rho) \rho^{n-1}\mid_{x_1 = b(t) } \]
As $\rho(a(t)) = \rho(b(t)) = 0 $, the boundary terms dissapear and we get the same estimate for $h$. 
\end{proof}

\begin{lemma}\label{Gradient Estimates}{\bf(Gradient estimates)}
The gradient estimate
\[\vert\tilde{v}\vert \leq \alpha \]
holds on the caps $C_t^{\alpha,i} $, $i=1,2$. In addition there is a constant $c$, such that
\[ v \leq c  \]
for the cylindrical part $R_t^{\alpha}$.

\end{lemma}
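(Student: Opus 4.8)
The plan is to mirror the proof of Lemma~\ref{gradient estimate}, making only the adjustments forced by the presence of two caps rather than one. First I would treat the caps $C_t^{\alpha,i}$, $i=1,2$. On each cap we use the vertical graph quantity $\tilde v = \langle \nu, \mathbf{i}_1\rangle^{-1}$ and the evolution equation (iv) of Lemma~\ref{Lemma_EvEq}, namely $\left(\frac{d}{dt}-\Delta\right)\tilde v = -|A|^2\tilde v - \frac{2}{\tilde v}|\nabla\tilde v|^2 \leq 0$. Hence $\tilde v$ is a subsolution, so by the maximum principle on time-dependent domains (cf.\ \cite{LS1}) its maximum over $\overline{C_t^{\alpha,i}}$ is attained either initially or on the lateral boundary $\partial C_t^{\alpha,i}$. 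By the very definition of the cap as the component containing the relevant tip on which $\frac{1}{\alpha} < \langle \nu, \mathbf{i}_1\rangle \leq 1$, we have $|\tilde v| \leq \alpha$ on $\partial C_t^{\alpha,i}$, and (assuming the initial surface already satisfies this, which may be arranged by the choice of $\alpha$ relative to $M_0$) also at $t=0$; the inequality $|\tilde v|\leq\alpha$ therefore propagates. One subtlety: unlike the free-boundary case there is no supporting plane $\Pi$, but on a compact axially symmetric surface with only two tips each cap is genuinely a graph over the $x_1$-axis away from its tip, so $\tilde v$ is well-defined and smooth there, and at the tip itself $\tilde v = 1$, consistent with the bound.

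For the cylindrical part $R_t^{\alpha} = M_t \setminus (C_t^{\alpha,1}\cup C_t^{\alpha,2})$ I would repeat verbatim the computation from Lemma~\ref{gradient estimate} for the auxiliary quantity $u^2 v$, where $u = \langle \mathbf{x},\omega\rangle$ and $v = \langle \nu,\omega\rangle^{-1}$. Using evolution equations (i) and (iii) of Lemma~\ref{Lemma_EvEq} one obtains
\[
\left(\frac{d}{dt}-\Delta\right)u^2 v \leq 2hu - (n-1)v - \frac{2}{v}\nabla v\cdot\nabla(u^2 v)\,,
\]
and invoking the height bound $u < R$ and the bound $0 \le h \le c_1$ (now from Lemma~\ref{Estimates on h}), the right-hand side (apart from the gradient term, which is handled by the maximum principle) is negative once $v > \frac{2 c_1 R}{n-1}$. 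On $\partial R_t^{\alpha}$ we have $v = \frac{\alpha}{\sqrt{\alpha^2-1}}$ on the two circles where $R_t^\alpha$ meets the caps (there is now no piece of boundary on a plane, so the case $v=1$ does not arise). Combining this boundary control with Assumption~\ref{A1}, which gives $u > c(\alpha)$ on $R_t^\alpha$ so that $u^2 v$ and $v$ are comparable there, the maximum principle yields $v \leq c$ on $R_t^\alpha$ for a constant $c$ depending only on $\alpha$, $n$, $c_1$, $R$ and $c(\alpha)$.

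The only genuinely new point compared to Lemma~\ref{gradient estimate} is bookkeeping: the boundary $\partial R_t^\alpha$ now has two components (the two interfaces with the caps) instead of one interface plus the circle on $\Pi$, and correspondingly the height-from-the-plane quantity $\tilde u$ must be replaced by the coordinate $x_1$ together with the bound $d(t)-e(t) < l$. I do not expect a real obstacle here; the estimates of Sections~3--5 were precisely arranged to survive this change, as already noted for Lemmas~\ref{Lemma_Length_of_the_generating_curve_is_bounded_2} and~\ref{Estimates on h}. The main thing to check is that $L_\alpha(t)$ still isolates each inclination angle on the two separate caps---i.e.\ that the inclination $\langle\nu,\mathbf{i}_1\rangle = 1/\alpha$ is achieved nowhere in $R_t^\alpha$ so that $C_t^{\alpha,1}$ and $C_t^{\alpha,2}$ are disjoint and $R_t^\alpha$ is connected---which follows from the same reasoning as in Section~2 applied near each tip, so that the maximum-principle arguments above apply on each region without interference.
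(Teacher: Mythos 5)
Your proposal is correct and follows essentially the same route as the paper, whose proof simply observes that the argument of Lemma~\ref{gradient estimate} applies unchanged with two caps instead of one and with the interface circles replacing the boundary on $\Pi$. The only detail you gloss over is the sign on the left cap, where $\langle\nu,\mathbf{i}_1\rangle$ is negative (so $\tilde v=-1$ at the left tip, not $+1$); applying your maximum-principle argument there to $\langle\nu,-\mathbf{i}_1\rangle^{-1}$ restores the stated bound $\vert\tilde v\vert\leq\alpha$.
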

\begin{proof}
The gradient estimates are as in Lemma \ref{gradient estimate}, but in this setting instead of one cap $C^{\alpha}_t$ we have two caps on either side, and the same estimate would suffice for both caps. 
\end{proof}

Concluding this section, we remark that  $H \geq 0$ at points where the caps $C_t^{\sqrt2,i} $, $i=1,2$, meet the cylindrical part $R_t^{\sqrt2}$ of the surface. The proof is using the same arguments as the one for Lemma \ref{HPositive} after the appropriate adjustments of the sign of $\arctan \dot \rho$ for cap on the left of the surface. The results on {\bf curvature estimates} and the {\bf convergence} to a limiting surface of constant mean curvature follow along the same lines as previously proved. In this case the limit surface is a sphere.

\end{document}